\documentclass[12pt]{amsart}
\usepackage{hyperref,amssymb}

\textwidth=160mm
\textheight=200mm
\topmargin=20mm
\hoffset=-20mm

\newtheorem{theorem}{Theorem}[section]
\newtheorem{definition}[theorem]{Definition}
\newtheorem{conjecture}[theorem]{Conjecture}
\newtheorem{proposition}[theorem]{Proposition}

\begin{document}

\title{Decomposition results for Gram matrix determinants}

\author{Teodor Banica}
\address{T.B.: Department of Mathematics, Cergy-Pontoise University, 95000 Cergy-Pontoise, France. {\tt teodor.banica@u-cergy.fr}}

\author{Stephen Curran}
\address{S.C.: Department of Mathematics, University of California, Los Angeles, CA 90095, USA. {\tt curransr@math.ucla.edu}}

\subjclass[2000]{46L54 (15A52, 33C80)}
\keywords{Orthogonal polynomial, Gram determinant}

\begin{abstract}
We study the Gram matrix determinants for the groups $S_n,O_n,B_n,H_n$, for their free versions $S_n^+,O_n^+,B_n^+,H_n^+$, and for the half-liberated versions $O_n^*,H_n^*$.  We first collect all the known computations of such determinants, along with complete and simplified proofs, and with generalizations where needed. We conjecture that all these determinants decompose as $D=\prod_\pi\varphi(\pi)$, with product over all associated partitions.
\end{abstract}

\maketitle

\section*{Introduction}

We discuss in this paper the computation of certain advanced representation theory invariants, for the main examples of ``easy quantum groups''. These are the groups $S_n,O_n,B_n,H_n$, their free versions $S_n^+,O_n^+,B_n^+,H_n^+$, and the half-liberated versions $O_n^*,H_n^*$. Here $S_n,O_n$ are the permutation and orthogonal groups, $B_n$ is the bistochastic group consisting of orthogonal matrices whose rows and columns sum to 1, and $H_n = \mathbb Z_2 \wr S_n$ is the hyperoctahedral group.  For a global introduction to these groups and quantum groups, we refer to our previous papers \cite{ez1}, \cite{ez2}, \cite{ez3}.

According to a paper of Weingarten \cite{wei}, further processed and generalized by Collins \cite{col}, then Collins and \'Sniady \cite{csn}, a number of advanced representation theory invariants of the quantum group are encoded in a certain associated matrix $G_{kn}$, called Gram matrix. For instance the inverse of the Gram matrix $G_{kn}$ is the Weingarten matrix $W_{kn}$, whose knowledge allows the full computation of the Haar functional. See \cite{wei}, \cite{col}, \cite{csn}.

Among these invariants, the central object is the Gram matrix determinant, $\det(G_{kn})$. For instance the roots of $\det(G_{kn})$ are the poles of the Weingarten function $W_{kn}$, and the knowledge of these numbers clarifies the invertibility assumptions in \cite{ez1}, \cite{ez2}, \cite{ez3}.

The quantity $\det(G_{kn})$ appears in fact naturally in relation with many other questions, and its exact computation a well-known problem. A first purpose of the present work is to collect all the available formulae from the literature, and to write them down by using our unified ``easy quantum group'' formalism, along with complete, simplified proofs.

The basic example of such a formula is that for $S_n,H_n,H_n^*$. Here the Gram matrix is upper triangular, up to a simple determinant-preserving operation. The determinant, computed by Jackson \cite{jac} and Lindstr\"om \cite{lin}, is as follows:
$$\det(G_{kn})=\prod_{\pi\in \mathcal P(k)}\frac{n!}{(n-|\pi|)!}$$

Here $\mathcal P(k)$ is the set of partitions associated to the quantum group, namely all partitions for $S_n$, all partitions with even blocks for $H_n$, and all partitions with blocks having the same number of odd and even legs for $H_n^*$, and $|.|$ is the number of blocks.

In the general case the situation is much more complicated. However, one may still wonder for a general decomposition result, of the following type:
$$\det(G_{kn})=\prod_{\pi\in \mathcal P(k)}\varphi(\pi)$$

This question is of course quite vague, depending on how explicit we would like our functions $\varphi$ to be. For instance a natural requirement would be that in the case of a liberation $G_n\to G_n^+$, the functions $\varphi$ are related by an induction/restriction procedure.

This kind of specialized question appears to be quite difficult. In this paper we will obtain some preliminary decomposition results of the above type, as follows:
\begin{enumerate}
\item For $O_n,B_n,O_n^*$ a formula comes from the work of Collins-Matsumoto \cite{cma} and Zinn-Justin \cite{zin}. The natural decomposition here is over Young diagrams, and in principle one can pass to partitions by applying a certain surjective map.

\item For $O_n^+,B_n^+,S_n^+,H_n^+$ we use the work of  Di Francesco, Golinelli and Guitter \cite{df1}, \cite{df2}, \cite{dg1}, \cite{dg2}, Tutte \cite{tut} and Dahab \cite{dah}. We will obtain some evidence towards the existence of contributions $\varphi(\pi)$, of ``trigonometric'' nature.
\end{enumerate}

We will make as well a number of speculations in relation with quantum group/planar algebra methods, and with spectral measure/orthogonal polynomial interpretations. 

As a conclusion, there is still a lot of work to be done, mostly towards the conceptual understanding, at the level of Gram determinants, of the operation $G_n\to G_n^+$.

The paper is organized as follows: 1-2 are preliminary sections, in 3-4 we discuss the classical and half-liberated cases, and in 5-6 we discuss the free case. The final sections, 7-9, contain a number of speculations on the formulae, and a few concluding remarks.

\subsection*{Acknowledgements}

This work was started at the Bedlewo 2009 workshop ``Noncommutative harmonic analysis'', and we are highly grateful to M. Bo\.zejko for the invitation, and for several stimulating discussions. S.C. would like to thank the Paul Sabatier University and the Cergy-Pontoise University, where another part of this work was done. The work of T.B. was supported by the ANR grants ``Galoisint'' and ``Granma'', and the work of S.C. was partially supported by an NSF Postdoctoral Fellowship.

\section{Easy quantum groups}

Let $\mathcal P_s$ be the category of all partitions. That is, $\mathcal P_s(k,l)$ is the set of partitions between an upper row of $k$ points and a lower row of $l$ points, and the categorical operations are the horizontal and vertical concatenation, and the upside-down turning.

A category of partitions $\mathcal P \subset \mathcal P_s$ is by definition a collection of sets $\mathcal P(k,l)\subset \mathcal P_s(k,l)$, which is stable under the categorical operations. We have the following examples.

\begin{proposition}
The following are categories of partitions:
\begin{enumerate}
\item $\mathcal P_o/\mathcal P_o^+$: all pairings/all noncrossing pairings.

\item $\mathcal P_o^*$: pairings with each string having an odd leg and an even leg.

\item $\mathcal P_b/\mathcal P_b^+$: singletons plus pairings/noncrossing pairings.

\item $\mathcal P_s/\mathcal P_s^+$: all partitions/all noncrossing partitions.

\item $\mathcal P_h/\mathcal P_h^+$: partitions/noncrossing partitions with blocks of even size.

\item $\mathcal P_h^*$: partitions with blocks having the same number of odd and even legs.
\end{enumerate}
\end{proposition}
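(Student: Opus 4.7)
The plan is to fix notation for the three categorical operations and then verify stability for each of the six families in turn, treating the routine cases briskly and focusing effort on the conditions that involve parity of positions.

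First I would set precise conventions. The horizontal concatenation $\pi \otimes \sigma$ is the disjoint side-by-side placement of diagrams; vertical concatenation $\pi \cdot \sigma$ glues the lower row of $\pi$ to the upper row of $\sigma$, identifies the newly connected points, erases any purely interior loops, and keeps the induced partition on the remaining upper and lower points; upside-down turning $\pi \mapsto \pi^{*}$ reflects in a horizontal axis, swapping the roles of the two rows. With these fixed, horizontal concatenation and upside-down turning preserve every one of the defining conditions in the statement essentially tautologically, so the real work is in vertical concatenation.

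Next I would handle the ``size of blocks'' conditions, which cover (1), (3), (4), (5) and the noncrossing variants. For vertical concatenation, observe that blocks of the composite are obtained by taking the transitive closure of the matching relation at the middle row on the union of blocks of $\pi$ and $\sigma$. This operation can only merge blocks, so: being a pairing is preserved if no odd-size merges occur, which is automatic because merges along a middle string join one block containing that middle point from above with one containing it from below; being a singleton-or-pair partition, all-partitions, or even-block partition is similarly preserved since even $+$ even $=$ even. For the noncrossing variants, I would cite the standard picture argument that a planar diagram composed with a planar diagram is planar (after removal of closed loops).

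The main obstacle is parity-tracking for the half-liberated cases (2) and (6), i.e.\ $\mathcal P_o^*$ and $\mathcal P_h^*$. Here I would label the points of each row $1,2,3,\dots$ from left to right and call a point of an upper row ``even/odd'' by its label, and a point of a lower row of length $l$ by the parity of $l+1-i$ where $i$ is its label (so that upside-down turning reverses parity globally but preserves the odd/even balance of each block). For horizontal concatenation the labels of the right-hand diagram are shifted by the length of the left-hand rows; I would check that this shift preserves the odd/even balance in each block, either because blocks stay in the right half (where the shift is a global constant added to every label in the block) or trivially. For vertical concatenation the key observation is that a middle point carries opposite parities viewed from above versus from below, so merging a block of $\pi$ containing a middle point $m$ with a block of $\sigma$ containing $m$ exchanges one odd leg for one even leg or vice-versa at the junction, keeping the running odd$-$even count of the merged block at zero provided both original blocks had count zero. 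An inductive argument on the number of middle strings, combined with removal of interior loops (which automatically have zero odd$-$even count), finishes the case.

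Finally, I would note that once vertical concatenation is checked, the identity and duality axioms of a category of partitions follow because the trivial identity pairing belongs to each of the listed families and each family is closed under the involution $\pi \mapsto \pi^{*}$ already verified. I expect the calculation in the half-liberated cases to be the only point where a reader could get confused, so I would include a small worked diagram there; everything else is bookkeeping.
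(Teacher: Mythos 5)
The paper dismisses this proposition with ``clear from definitions,'' so you are supplying real content, and your overall strategy --- reduce everything to vertical concatenation and isolate the parity bookkeeping as the only delicate point --- is exactly the right one. But the parity convention you chose is the one place where the argument genuinely breaks. You assign to a lower point $i$ of a row of length $l$ the parity of $l+1-i$, independently of the length $k$ of the upper row. Under that convention the identity pairing in $\mathcal P(k,k)$ for $k$ odd (already $k=1$) joins two legs of the \emph{same} parity, so it would fail the defining condition of $\mathcal P_o^*$ and the family would not contain the identity morphisms. Likewise your key claim that ``a middle point carries opposite parities viewed from above versus from below'' is false whenever the middle row has odd length (lower parity $l+1-j$ versus upper parity $j$ agree exactly when $l$ is even fails to hold, i.e.\ they coincide for $l$ odd), and odd middle rows do occur, e.g.\ when composing the four-block in $\mathcal P_h^*(1,3)$ with the one in $\mathcal P_h^*(3,1)$. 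The correct convention is the circular one implicit in \cite{bsp}, \cite{bve}: the condition on $\mathcal P(k,l)$ is the one inherited from $\mathcal P(0,k+l)$ by rotation, so the lower point $j$ receives the parity of $k+l+1-j$. With that convention everything you want becomes true, but only after invoking the observation that every partition in $\mathcal P_o^*$ or $\mathcal P_h^*$ has an even total number of legs $k+l$ (each block being balanced forces this); that evenness is precisely what makes the two parities at a middle point opposite, makes the outer legs of a composite inherit consistent parities, and makes horizontal concatenation and turning act by a \emph{global} parity shift rather than a row-dependent one. Your proof should state this evenness explicitly and use it; as written, the lemma you lean on is simply not true for the diagrams you need it for.

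A smaller point: for the pairing cases (1)--(3) the slogan ``even $+$ even $=$ even'' shows that block sizes stay even but not that they stay equal to two; a block of size four is even and is not a pair. What you actually need is the path/cycle structure of the composition graph: in a composition of pairings every middle point has degree two and every outer point degree one, so components are either closed cycles (deleted) or paths with exactly two outer endpoints, whence the composite is again a pairing; the singleton-plus-pairing case is the same with degree-zero and degree-one middle points allowed. You gesture at this but should say it, since the parity argument alone does not deliver it.
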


\begin{proof}
This is clear from definitions. Note that $\mathcal P_g^\times$ corresponds via Tannakian duality \cite{wo1}, \cite{wo2} to the easy quantum group $G^\times=(G_n^\times)$, with the notations in \cite{ez1}, \cite{bsp}.
\end{proof}

We use the notation $\mathcal P(k)=\mathcal P(0,k)$. We denote by $\vee$ and $\wedge$ the set-theoretic sup and inf of partitions, always taken with respect to $\mathcal P_s$, and by $|.|$ the number of blocks.

\begin{definition}
Associated to any category of partitions $\mathcal P$ and to any numbers $k,n\geq 0$ are the following matrices, with entries indexed by $\pi,\sigma\in \mathcal P(k)$:
\begin{enumerate}
\item Gram matrix: $G_{kn}(\pi,\sigma)=n^{|\pi\vee\sigma|}$.

\item Weingarten matrix: $W_{kn}=G_{kn}^{-1}$.
\end{enumerate}
\end{definition}

In order for $G_{kn}$ to be invertible, $n$ must be big enough, and $n\geq k$ is known to be sufficient. The precise bounds depend on the category of partitions, and can be deduced from the various explicit formulae of $\det(G_{kn})$, to be given later on in this paper.

The interest in the above matrices comes from the fact that in the case $\mathcal P= \mathcal P_g^\times$, they describe the integration over the corresponding easy quantum group $G_n^\times$.

\begin{theorem}
We have the Weingarten formula
$$\int_{G_n^\times}u_{i_1j_1}\ldots u_{i_kj_k}\,du=\sum_{\pi,\sigma\in \mathcal P_g^\times(k)}\delta_\pi(i)\delta_\sigma(j)W_{kn}(\pi,\sigma)$$
where the $\delta$ symbols are $0$ or $1$, depending on whether the indices fit or not.
\end{theorem}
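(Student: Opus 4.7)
The plan is to derive the formula from the standard representation-theoretic identity that the Haar integral of a product of matrix coefficients computes the corresponding entry of the orthogonal projection onto the space of fixed vectors of the relevant tensor power. Concretely, writing $e_i = e_{i_1}\otimes\cdots\otimes e_{i_k}$ and letting $P_k$ denote the orthogonal projection of $(\mathbb C^n)^{\otimes k}$ onto $\mathrm{Fix}(u^{\otimes k})$, one has
$$\int_{G_n^\times} u_{i_1j_1}\cdots u_{i_kj_k}\,du = \langle P_k\, e_j, e_i\rangle.$$
So the task reduces to writing $P_k$ in a convenient basis and reading off its entries.

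The next step is to identify $\mathrm{Fix}(u^{\otimes k})$. By the Tannakian duality principle invoked in the proof of the preceding proposition, this fixed-point space is spanned by the vectors $\xi_\pi = \sum_i \delta_\pi(i)\, e_i$ indexed by $\pi\in\mathcal P_g^\times(k)$. A direct count then gives
$$\langle \xi_\pi,\xi_\sigma\rangle = \#\{i : \delta_\pi(i)=\delta_\sigma(i)=1\} = n^{|\pi\vee\sigma|} = G_{kn}(\pi,\sigma),$$
since a multi-index $i$ satisfies both constraints exactly when it is constant on each block of $\pi\vee\sigma$, and there are $n^{|\pi\vee\sigma|}$ such indices. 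This explains both the definition of the Gram matrix and its role in what follows.

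Assuming $n$ is large enough that $G_{kn}$ is invertible, so that $\{\xi_\pi\}$ is a basis of $\mathrm{Fix}(u^{\otimes k})$, the orthogonal projection onto their span is expressible in terms of the inverse Gram matrix as
$$P_k = \sum_{\pi,\sigma\in \mathcal P_g^\times(k)} W_{kn}(\pi,\sigma)\,|\xi_\pi\rangle\langle\xi_\sigma|.$$
This is a standard linear-algebra identity: self-adjointness and range inclusion in $\mathrm{span}\{\xi_\pi\}$ are obvious, while the identity $P_k\xi_\tau = \xi_\tau$ for all $\tau$ reduces to $\sum_\sigma W_{kn}(\pi,\sigma) G_{kn}(\sigma,\tau)=\delta_{\pi\tau}$, the defining relation of the Weingarten matrix. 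Substituting this expression for $P_k$ into the Haar-integral identity above, and using $\langle e_j,\xi_\sigma\rangle = \delta_\sigma(j)$ and $\langle \xi_\pi,e_i\rangle = \delta_\pi(i)$, immediately yields the claimed formula.

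The only real obstacle is the input from Tannakian duality identifying the span of the $\xi_\pi$ with $\mathrm{Fix}(u^{\otimes k})$; this is precisely the content of the Woronowicz-type theorem referenced for the preceding proposition and can be cited. Once that identification is in hand, the remaining steps are formal: a combinatorial count for the Gram matrix entries and the Gram/Weingarten inversion for the projection formula.
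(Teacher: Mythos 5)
Your argument is correct and is precisely the ``classical argument'' that the paper cites from Weingarten, Collins--\'Sniady and Banica--Speicher: the Haar integral of the matrix coefficients is the corresponding entry of the projection onto $\mathrm{Fix}(u^{\otimes k})$, that space is spanned by the partition vectors $\xi_\pi$ by Tannakian duality, and the projection onto their span is given by the inverse of their Gram matrix. Your explicit caveat that $G_{kn}$ must be invertible (i.e.\ $n$ large enough) matches the paper's own standing assumption, so there is nothing to add.
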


\begin{proof}
This follows by using a classical argument from \cite{wei}, \cite{csn}. See \cite{ez1}, \cite{bsp}.
\end{proof}

The exact computation of the Weingarten matrix is a quite subtle problem. A precise result is available only in the finite group case, where the formula is given in terms of the M\"obius function $\mu$ on $\mathcal P$ as follows.

\begin{proposition}
For $S_n,H_n$ the Weingarten function is given by
$$W_{kn}(\pi,\sigma)=\sum_{\tau\leq\pi\wedge\sigma}\mu(\tau,\pi)\mu(\tau,\sigma)\frac{(n-|\tau|)!}{n!}$$
and satisfies $W_{kn}(\pi,\sigma)=n^{-|\pi\wedge\sigma|}(
\mu(\pi\wedge\sigma,\pi)\mu(\pi\wedge\sigma,\sigma)+O(n^{-1}))$.
\end{proposition}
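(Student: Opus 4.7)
The plan is to realize $G_{kn}$ as a triangular factorization through the partition poset and then invert by Möbius inversion.

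Step 1 (factorization). The combinatorial identity to exploit is
$$n^{|\pi\vee\sigma|}=\sum_{\tau\geq\pi,\,\tau\geq\sigma}\frac{n!}{(n-|\tau|)!},$$
obtained by counting the $n^{|\pi\vee\sigma|}$ functions from the blocks of $\pi\vee\sigma$ into $\{1,\ldots,n\}$ according to their kernel: such a kernel is a partition $\tau$ of $\{1,\ldots,k\}$ coarsening $\pi\vee\sigma$ (equivalently, $\tau\geq\pi$ and $\tau\geq\sigma$), and the number of injections on $|\tau|$ blocks into $\{1,\ldots,n\}$ is $n!/(n-|\tau|)!$. For $H_n$ one needs the additional observation that $\mathcal P_h$ is closed under coarsening of its own elements (a union of even-sized blocks is even-sized), so the $\tau$ in the sum automatically lies in $\mathcal P_h$. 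This identity is the matrix factorization $G_{kn}=ZDZ^T$, where $Z(\pi,\tau)=[\pi\leq\tau]$ is the zeta matrix of the poset $\mathcal P_g(k)$ and $D$ is diagonal with entries $D(\tau,\tau)=n!/(n-|\tau|)!$.

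Step 2 (inversion). Since $Z$ is invertible with inverse the Möbius matrix $M(\tau,\pi)=\mu(\tau,\pi)$, one obtains $W_{kn}=G_{kn}^{-1}=M^TD^{-1}M$. Reading off the $(\pi,\sigma)$ entry yields
$$W_{kn}(\pi,\sigma)=\sum_\tau\mu(\tau,\pi)\,\mu(\tau,\sigma)\,\frac{(n-|\tau|)!}{n!},$$
and $\mu(\tau,\pi)=0$ unless $\tau\leq\pi$ (and similarly for $\sigma$), so the sum reduces to $\tau\leq\pi\wedge\sigma$, which is the claimed formula.

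Step 3 (asymptotic). Expand $(n-|\tau|)!/n!=n^{-|\tau|}(1+O(n^{-1}))$. Among $\tau\leq\pi\wedge\sigma$ the block count $|\tau|$ is minimized exactly at $\tau=\pi\wedge\sigma$ (any strict refinement splits a block and strictly increases $|\tau|$), with value $|\pi\wedge\sigma|$. Hence the leading contribution is $\mu(\pi\wedge\sigma,\pi)\,\mu(\pi\wedge\sigma,\sigma)\,n^{-|\pi\wedge\sigma|}$, and all remaining terms are $O(n^{-|\pi\wedge\sigma|-1})$, giving the stated expansion.

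The main point requiring care is the $H_n$ case: one must verify that the zeta/Möbius inversion is genuinely being carried out within $\mathcal P_h(k)$, and that the Möbius function appearing there is unambiguous. This is settled by the two observations that (i) coarsening preserves the even-block property, so the factorization $G=ZDZ^T$ really does live in the sub-poset $\mathcal P_h$, and (ii) for $\tau,\pi\in\mathcal P_h$ the interval $[\tau,\pi]$ in $\mathcal P_h$ coincides with the interval in $\mathcal P_s$ (again by the coarsening argument), so $\mu$ may be taken indifferently for either poset. This is not a serious obstacle, but it is the one place where the proof is not purely formal.
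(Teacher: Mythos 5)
Your proof is correct, but it takes a genuinely different route from the paper. The paper derives the formula from the Weingarten integration formula (Theorem 1.3): for the finite groups $S_n,H_n$ the Haar integrals $\int u_{i_1j_1}\cdots u_{i_kj_k}$ are known explicitly in terms of $\frac{(n-|\tau|)!}{n!}$ where $\tau$ is the kernel of the index sequence, and a double M\"obius inversion of the resulting linear system yields $W_{kn}$. You instead never touch the group: you invert the Gram matrix directly, via the factorization $G_{kn}=ZDZ^T$ coming from the classification of functions on the blocks of $\pi\vee\sigma$ by their kernel. The two arguments share the same M\"obius-inversion engine, but the inputs differ: the paper's proof uses the group-theoretic evaluation of the integrals (and implicitly that $G_{kn}$ is invertible so that $W_{kn}=G_{kn}^{-1}$ is characterized by the integration formula), while yours is purely combinatorial, proves invertibility of $G_{kn}$ as a byproduct (unitriangular $Z$ times invertible diagonal), and is essentially the Lindstr\"om-type triangularization that the paper only deploys later, in the proof of Theorem 3.1. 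Your closing remarks on the $H_n$ case (coarsenings preserve evenness of blocks; intervals in $\mathcal P_h$ agree with intervals in $\mathcal P_s$) are exactly the right points to check and are correctly settled.

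One small caveat, which is really a defect of the statement rather than of your proof: for $H_n$ the meet $\pi\wedge\sigma$ taken in $\mathcal P_s$ need not lie in $\mathcal P_h(k)$ (e.g.\ two distinct pair partitions of four points meet in the singleton partition), in which case the index set $\{\tau\in\mathcal P_h(k):\tau\leq\pi\wedge\sigma\}$ may be empty and the asymptotic leading term $\mu(\pi\wedge\sigma,\pi)\mu(\pi\wedge\sigma,\sigma)n^{-|\pi\wedge\sigma|}$ must be read as $0$. Your Step 3 silently assumes $\pi\wedge\sigma$ belongs to the sum; a one-line remark covering the empty case would make the $H_n$ asymptotics airtight. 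The paper's own proof is equally silent on this point.
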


\begin{proof}
The first assertion follows from the Weingarten formula: in that formula the integrals on the left are known, and this allows the computation of the right term, via the M\"obius inversion formula. The second assertion follows from the first one.
\end{proof}

In the general case we have the following result, which is useful for applications.

\begin{proposition}
For $\pi\leq\sigma$ we have the estimate
$$W_{kn}(\pi,\sigma)=n^{-|\pi|}(\mu(\pi,\sigma)+O(n^{-1}))$$
and for $\pi,\sigma$ arbitrary we have $W_{kn}(\pi,\sigma)=O(n^{|\pi\vee\sigma|-|\pi|-|\sigma|})$.
\end{proposition}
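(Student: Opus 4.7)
The plan is to pass to the rescaled Gram matrix $\tilde G_{kn}=D^{-1/2}G_{kn}D^{-1/2}$, where $D$ is diagonal with $D(\pi,\pi)=n^{|\pi|}$. Then $\tilde G_{kn}(\pi,\sigma)=n^{-d(\pi,\sigma)/2}$ with $d(\pi,\sigma)=|\pi|+|\sigma|-2|\pi\vee\sigma|$. Since $\pi\vee\sigma\leq\pi,\sigma$ in the blocks-count order, $d(\pi,\sigma)$ is a nonnegative integer that vanishes only on the diagonal. Hence $\tilde G_{kn}=I+N$ with $|N(\pi,\sigma)|\leq n^{-1/2}$ off the diagonal, so for $n$ large enough the Neumann series $\tilde G_{kn}^{-1}=\sum_{m\geq 0}(-N)^m$ converges, the $m$-th term being a sum over paths $\pi=\tau_0,\tau_1,\ldots,\tau_m=\sigma$ in $\mathcal P(k)$ (consecutive entries distinct), each contributing $n^{-\sum_i d(\tau_i,\tau_{i+1})/2}$. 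Since $W_{kn}(\pi,\sigma)=n^{-(|\pi|+|\sigma|)/2}\tilde G_{kn}^{-1}(\pi,\sigma)$, claim (b) reduces to showing that the series is $O(n^{-d(\pi,\sigma)/2})$.

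The decisive input is the inequality
$$\sum_{i=0}^{m-1}|\tau_i\vee\tau_{i+1}|-\sum_{i=1}^{m-1}|\tau_i|\leq|\pi\vee\sigma|,$$
valid along every path. For $m=2$ this reads $|\pi\vee\tau|+|\tau\vee\sigma|\leq|\tau|+|\pi\vee\sigma|$ and follows directly from upper semimodularity of the partition lattice applied to the pair $\pi\vee\tau,\,\sigma\vee\tau$, combined with the trivial bounds $(\pi\vee\tau)\wedge(\sigma\vee\tau)\geq\tau$ and $\pi\vee\sigma\vee\tau\geq\pi\vee\sigma$; the general case follows by induction on $m$ from the $m=2$ bound. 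This semimodularity estimate is the one substantive step; everything else is bookkeeping. Once it is in hand, each Neumann term is bounded by $n^{-d(\pi,\sigma)/2}$, and a geometric bound on the tail (using $|N(\pi,\sigma)|\leq n^{-1/2}$ off the diagonal) yields $\tilde G_{kn}^{-1}(\pi,\sigma)=O(n^{-d(\pi,\sigma)/2})$, which unrescales to claim (b).

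For claim (a) I would extract the leading constant from the same expansion. A short manipulation gives
$$\sum_{i=0}^{m-1}d(\tau_i,\tau_{i+1})-d(\pi,\sigma)=2\sum_{i=0}^{m-1}\bigl(|\tau_{i+1}|-|\tau_i\vee\tau_{i+1}|\bigr)$$
when $\pi\leq\sigma$ (so $|\pi\vee\sigma|=|\sigma|$), and each summand on the right is a nonnegative integer that vanishes iff $\tau_i\leq\tau_{i+1}$. Thus the paths saturating the main inequality are exactly the strict chains $\pi=\tau_0<\tau_1<\cdots<\tau_m=\sigma$ in $\mathcal P(k)$, and every non-chain path contributes an excess of at least $2$, i.e., a correction of relative order $n^{-1}$. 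The leading coefficient of $\tilde G_{kn}^{-1}(\pi,\sigma)$ at order $n^{-(|\pi|-|\sigma|)/2}$ is therefore $\sum_m(-1)^m c_m(\pi,\sigma)$, where $c_m(\pi,\sigma)$ is the number of strict chains of length $m$ from $\pi$ to $\sigma$ in $\mathcal P(k)$. By Philip Hall's theorem this sum equals the Möbius function $\mu(\pi,\sigma)$ of the induced subposet $\mathcal P(k)$, and unrescaling gives $W_{kn}(\pi,\sigma)=n^{-|\pi|}(\mu(\pi,\sigma)+O(n^{-1}))$, as required.
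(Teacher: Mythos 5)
Your proposal is correct, and it is essentially the ``classical argument'' that the paper merely cites (from \cite{ez1}, going back to Collins--\'Sniady): rescale the Gram matrix by the diagonal $n^{|\pi|}$, expand the inverse as a Neumann series, control each path by the semimodular inequality $|\pi\vee\tau|+|\tau\vee\sigma|\leq|\tau|+|\pi\vee\sigma|$, and identify the leading coefficient via Philip Hall's chain-counting formula for the M\"obius function of the subposet $\mathcal P(k)$. Since the paper gives no details, your reconstruction supplies exactly the intended proof, with the semimodularity step and the geometric tail bound correctly handled.
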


\begin{proof}
Once again this follows by using a classical argument, see \cite{ez1}.
\end{proof}

\section{Gram determinants}

In this paper, we will be mainly interested in the computation of $\det(G_{kn})$. Let us being with some simple observations, coming from definitions.

\begin{proposition}
Let $D_k(n)=\det(G_{kn})$, viewed as element of $\mathbb Z[n]$.
\begin{enumerate}
\item $D_k$ is monic, of degree $s_k=\sum_{\pi\in \mathcal P(k)}|\pi|$.

\item We have $n^{b_k}|D_k$, where $b_k=\# \mathcal P(k)$.
\end{enumerate}
\end{proposition}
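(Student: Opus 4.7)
The plan is to expand the determinant directly as a signed sum over permutations of $\mathcal P(k)$, and to show that the identity term is the unique contribution of maximal degree while every matrix entry carries at least one factor of $n$.

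First I write
$$D_k(n) = \sum_f \varepsilon(f) \prod_{\pi \in \mathcal P(k)} n^{|\pi \vee f(\pi)|},$$
where $f$ runs over bijections of $\mathcal P(k)$. The key elementary observation is that $|\pi \vee \sigma| \leq |\pi|$, with equality if and only if $\sigma \leq \pi$ in the refinement order (since $\pi \vee \sigma = \pi$ is equivalent to $\sigma \leq \pi$). Summing over $\pi$ gives
$$\sum_\pi |\pi \vee f(\pi)| \;\leq\; \sum_\pi |\pi| \;=\; s_k,$$
with equality iff $f(\pi) \leq \pi$ for every $\pi$.

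For (1), I then verify the poset rigidity fact: a bijection $f$ of the finite set $\mathcal P(k)$ with $f(\pi) \leq \pi$ for all $\pi$ must be the identity. Indeed, $f$ has finite order $N$, so $\pi = f^N(\pi) \leq f^{N-1}(\pi) \leq \cdots \leq f(\pi) \leq \pi$ forces equalities throughout, giving $f(\pi) = \pi$. Consequently the unique term of top $n$-degree in the expansion corresponds to $f = \mathrm{id}$, contributes $+\prod_\pi n^{|\pi|} = n^{s_k}$, and every other permutation contributes a strictly lower power of $n$. This shows $D_k$ is monic of degree $s_k$.

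For (2), assuming $k \geq 1$, each join $\pi \vee \sigma$ has at least one block, so $n$ divides every entry $G_{kn}(\pi,\sigma)$. Pulling out a factor of $n$ from each of the $b_k = \#\mathcal P(k)$ columns of the Gram matrix yields $n^{b_k} \mid D_k(n)$. No step here poses a serious obstacle; the only subtle point is the rigidity lemma for order-decreasing bijections of a finite poset, and once that is in hand both claims follow by inspection of the Leibniz expansion.
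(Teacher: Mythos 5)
Your argument is correct and follows essentially the same route as the paper: the Leibniz expansion together with the inequality $|\pi\vee\sigma|\leq|\pi|$ (with equality iff $\sigma\leq\pi$) for part (1), and pulling a factor of $n$ out of each row or column using $|\pi\vee\sigma|\geq 1$ for part (2). The only difference is that you spell out the rigidity lemma for order-decreasing bijections of a finite poset, which the paper simply asserts; this is a welcome addition but not a change of method.
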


\begin{proof}
(1) This follows from $|\pi\vee\sigma|\leq|\pi|$, with equality if and only if $\sigma\leq\pi$. Indeed, from the inequality we get $\deg(D_k)\leq s_k$. Now the coefficient of $n^{s_k}$ is the signed number of permutations $f:\mathcal P(k)\to \mathcal P(k)$ satisfying $f(\pi)\leq\pi$ for any $\pi$, and since there is only one such permutation, namely the identity, we obtain that this coefficient is 1.

(2) This is clear from the definition of $D_k$, and from $|\pi\vee\sigma|\geq 1$.
\end{proof}

The above result raises the question of computing the numbers $b_k=\#\mathcal P(k)$ and $s_k=\sum_{\pi\in \mathcal P(k)}|\pi|$. It is convenient here to introduce as well the related numbers $m_k=s_k/b_k$ and $a_k=2s_k-kb_k=(2m_k-k)b_k$, which will appear several times in what follows.

\begin{proposition}
The numbers $b_k,s_k,m_k,a_k$ are as follows:
\begin{enumerate}
\item $O_n,O_n^*,O_n^+$: $b_{2l}=(2l)!!,l!,\frac{1}{l+1}\binom{2l}{l}$, $s_{2l}=lb_{2l}$, $m_{2l}=l$, $a_{2l}=0$.

\item $S_n$: $b_k=$ Bell, $s_k=b_{k+1}-b_k$, $m_k=\frac{b_{k+1}}{b_k}-1$, $a_k=2b_{k+1}-(k+2)b_k$.

\item $S_n^+$: $b_k=\frac{1}{k+1}\binom{2k}{k}$, $s_k=\frac{1}{2}\binom{2k}{k}$, $m_k=\frac{k+1}{2}$, $a_k=b_k$.

\item $H_n^+$: $b_{2l}=\frac{1}{2l+1}\binom{3l}{l}$, $s_{2l}=\binom{3l-1}{l-1}$, $m_{2l}=\frac{2l+1}{3}$, $a_{2l}=-2\binom{3l-1}{l-2}$.
\end{enumerate}
\end{proposition}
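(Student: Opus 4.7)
The four cases are handled separately by combinatorial arguments specific to each category of partitions. In every case, once $b_k$ and $s_k$ are known, the values of $m_k = s_k/b_k$ and $a_k = 2 s_k - k b_k$ are immediate from the definitions, so the real content of the proposition lies in computing $b_k$ and $s_k$.

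For Part (1), every element of $\mathcal{P}_o(2l)$, $\mathcal{P}_o^*(2l)$ or $\mathcal{P}_o^+(2l)$ is a pairing, hence has exactly $l$ blocks. This at once gives $s_{2l} = l b_{2l}$, $m_{2l} = l$ and $a_{2l} = 0$. The three values of $b_{2l}$ are classical: the total number of pairings of $[2l]$ is the double factorial, the number with each string having one odd and one even leg equals $l!$ (such a pairing amounts to a bijection $\{1,3,\ldots,2l-1\} \to \{2,4,\ldots,2l\}$), and the number of noncrossing pairings is the Catalan number. For Part (2), the key is the well-known Bell recursion $b_{k+1} = b_k + s_k$, obtained by noting that every partition of $[k+1]$ arises uniquely from a partition $\pi$ of $[k]$ by either adjoining $\{k+1\}$ as a singleton or inserting $k+1$ into one of the $|\pi|$ existing blocks, so that $b_{k+1} = \sum_{\pi \in \mathcal{P}(k)} (1 + |\pi|) = b_k + s_k$.

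For Part (3), the approach is to invoke the theorem of Kreweras that the noncrossing partitions of $[k]$ with exactly $j$ blocks are counted by the Narayana number $N(k,j) = \frac{1}{k} \binom{k}{j} \binom{k}{j-1}$. The symmetry $N(k,j) = N(k, k+1-j)$ then yields
\[ 2 s_k \;=\; \sum_j j\, N(k,j) + \sum_j (k+1-j)\, N(k,j) \;=\; (k+1) b_k, \]
so $s_k = (k+1) b_k / 2 = \tfrac{1}{2} \binom{2k}{k}$, from which $m_k = (k+1)/2$ and $a_k = b_k$ follow by direct substitution.

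Part (4) is the most involved. The formula $b_{2l} = \frac{1}{2l+1} \binom{3l}{l}$ is a standard Fuss--Catalan instance, which I would derive from a first-return decomposition: the block containing the element $1$ has some even size $2k$, and cuts the remaining $2l-2k$ elements into $2k$ consecutive intervals, each itself an element of $\mathcal{P}_h^+$. Summing a geometric series yields the functional equation $f = 1 + x f^3$ for $f(x) = \sum_l b_{2l}\, x^l$, and Lagrange inversion then gives the formula. To compute $s_{2l}$, I would refine this equation by placing a marker $z$ on each block, obtaining $F = 1 + (z-1) x F^2 + x F^3$ for the bivariate generating function $F(x,z) = \sum_{l,j} M(l,j)\, x^l z^j$. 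Differentiating at $z=1$ and using $x f^3 = f - 1$ produces
\[ \frac{\partial F}{\partial z}\bigg|_{z=1} \;=\; \frac{x f^2}{1 - 3 x f^2} \;=\; \frac{f-1}{3-2f}, \]
and a further Lagrange inversion extracts $s_{2l} = \binom{3l-1}{l-1}$. The expressions for $m_{2l}$ and $a_{2l}$ are then routine binomial manipulations based on $\binom{3l}{l} = 3 \binom{3l-1}{l-1}$. The chief obstacle is this last Lagrange-inversion step; an attractive alternative would be to establish $m_{2l} = (2l+1)/3$ directly by a cyclic-symmetry or Kreweras-type bijection on $\mathcal{P}_h^+(2l)$, after which $s_{2l} = m_{2l} b_{2l}$ would deliver the formula without further computation.
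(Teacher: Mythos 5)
Your proposal is correct, and it does considerably more than the paper, whose entire proof of this proposition is the single sentence ``All these results are well-known.'' Your case~(1) and case~(2) arguments are exactly the standard ones (note only that the number of pairings of $2l$ points is $(2l-1)!!=1\cdot3\cdots(2l-1)$; the paper's notation $(2l)!!$ should be read with that convention, not as $2\cdot4\cdots(2l)$). The Narayana-symmetry trick in case~(3) is clean and gives $s_k=\tfrac{1}{2}\binom{2k}{k}$ with no computation. For case~(4), your functional equations are right: the first-block decomposition gives $F=1+\sum_{k\geq1}zx^kF^{2k}=1+\frac{zxF^2}{1-xF^2}$, which rearranges to $F=1+(z-1)xF^2+xF^3$ and specializes at $z=1$ to $f=1+xf^3$, and your differentiation yielding $\partial F/\partial z|_{z=1}=(f-1)/(3-2f)$ is correct. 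The one step you flag as an obstacle is genuinely routine: writing $w=f-1$ so that $w=x(1+w)^3$, Lagrange inversion gives
$$[x^l]\,\frac{w}{1-2w}=\frac{1}{l}\,[w^{l-1}]\,\frac{(1+w)^{3l}}{(1-2w)^2}=\frac{1}{l}\sum_{i+j=l-1}\binom{3l}{i}(j+1)2^j=\binom{3l-1}{l-1},$$
as one checks for small $l$ and proves by a standard binomial identity; alternatively the value $s_{2l}=\binom{3l-1}{l-1}$ can be read off from the trace formula $T_{2l}(t)=\sum_r\frac{1}{r}\binom{l-1}{r-1}\binom{2l}{r-1}t^r$ quoted later in Section~2, via $s_{2l}=T_{2l}'(1)$. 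Your closing reductions $\binom{3l}{l}=3\binom{3l-1}{l-1}$ and $\binom{3l-1}{l-2}=\frac{l-1}{2l+1}\binom{3l-1}{l-1}$ do give $m_{2l}=(2l+1)/3$ and $a_{2l}=-2\binom{3l-1}{l-2}$ exactly as stated.
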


\begin{proof}
All these results are well-known.
\end{proof}

For the remaining quantum groups, namely $B_n,B_n^+,H_n,H_n^*$, the numbers $b_k,s_k,m_k,a_k$ are given by quite complicated formulae. The best approach to their computation is via the trace of the Gram matrix, and its analytic interpretations.

So, let us first reformulate Proposition 2.1, in the following way.

\begin{proposition}
With $D_k(n)=\det(G_{kn})$ and $T_k(t)=Tr(G_{kt})$, we have:
\begin{enumerate}
\item $D_k(n)=n^{s_k}(1+O(n^{-1}))$ as $n \to \infty$, where $s_k=T_k'(1)$.

\item $D_k(n)=O(n^{b_k})$ as $n \to 0$, where $b_k=T_k(1)$.
\end{enumerate}
\end{proposition}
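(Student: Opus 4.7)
The plan is to observe that the proposition is essentially a reinterpretation of Proposition 2.1, where the two numerical invariants $s_k$ and $b_k$ attached to the polynomial $D_k$ are reread off the trace polynomial $T_k$. The work splits into a routine computation of $T_k$ from the definition of $G_{kt}$, followed by matching with the two statements of Proposition 2.1.

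First I would compute the diagonal of the Gram matrix. By Definition 1.2 one has $G_{kt}(\pi,\pi) = t^{|\pi\vee\pi|} = t^{|\pi|}$, so
\[
T_k(t) = \sum_{\pi\in\mathcal P(k)} t^{|\pi|}.
\]
From this, $T_k(1) = \#\mathcal P(k) = b_k$ and $T_k'(1) = \sum_{\pi\in\mathcal P(k)} |\pi| = s_k$, where $s_k$ and $b_k$ are the quantities from Proposition 2.1.

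Next I would transfer each of the two items. For (1), Proposition 2.1 says that $D_k\in\mathbb Z[n]$ is monic of degree $s_k$, which immediately gives $D_k(n) = n^{s_k}(1+O(n^{-1}))$ as $n\to\infty$; combining this with the identification $s_k = T_k'(1)$ finishes the first assertion. For (2), Proposition 2.1 gives $n^{b_k}\mid D_k(n)$ in $\mathbb Z[n]$, so $D_k$ has a zero of order at least $b_k$ at the origin, which yields $D_k(n) = O(n^{b_k})$ as $n\to 0$; together with $b_k = T_k(1)$ this is exactly the second claim.

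There is no real obstacle here: the only thing to notice is that the diagonal entries of the Gram matrix produce precisely the generating polynomial $\sum_{\pi} t^{|\pi|}$ for the block-count statistic on $\mathcal P(k)$, so the value and derivative of $T_k$ at $t=1$ encode respectively the cardinality of $\mathcal P(k)$ and the total number of blocks. Everything else is a direct quotation of Proposition 2.1.
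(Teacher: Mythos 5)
Your proof is correct and follows exactly the paper's approach: the paper also treats this as a direct reformulation of Proposition 2.1, with the only substantive observation being that the diagonal entries $G_{kt}(\pi,\pi)=t^{|\pi|}$ make $T_k(t)=\sum_{\pi\in\mathcal P(k)}t^{|\pi|}$ the generating polynomial of the block-count statistic, so that $T_k(1)=b_k$ and $T_k'(1)=s_k$. You have simply written out the details that the paper leaves implicit.
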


\begin{proof}
This is indeed just a reformulation of Proposition 2.1, using a variable $t$ around 1. Note that in (2) we regard the variable $n$ as a formal parameter, going to $0$.
\end{proof}

The trace can be understood in terms of the associated Stirling numbers. 

\begin{proposition}
We have the formula
$$T_k(t)=\sum_{r=1}^kS_{kr}t^r$$
where $S_{kr}=\#\{\pi\in \mathcal P(k):|\pi|=r\}$ are the Stirling numbers.
\end{proposition}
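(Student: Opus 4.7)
The plan is essentially a one-line unwinding of definitions, so I would keep the proof short and proceed as follows.

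First I would recall that by Definition 1.2 the diagonal entries of the Gram matrix are
$$G_{kt}(\pi,\pi)=t^{|\pi\vee\pi|}=t^{|\pi|},$$
using the obvious set-theoretic identity $\pi\vee\pi=\pi$. This is the only substantive observation in the proof; once it is in hand, everything else is just bookkeeping.

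Next I would sum over the diagonal to get
$$T_k(t)=\mathrm{Tr}(G_{kt})=\sum_{\pi\in\mathcal P(k)}t^{|\pi|}.$$
Finally I would partition the index set $\mathcal P(k)$ according to the value of $|\pi|$: since $|\pi|$ is an integer between $1$ and $k$, collecting terms with $|\pi|=r$ gives a coefficient of $t^r$ equal to the number of partitions of $k$ points (within the chosen category $\mathcal P$) having exactly $r$ blocks, which is by definition $S_{kr}$. This produces the claimed expansion $T_k(t)=\sum_{r=1}^k S_{kr}t^r$.

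There is no real obstacle here; the only thing one might want to double-check is that the upper limit of summation is correctly $k$ (no partition of $k$ points can have more than $k$ blocks) and that the lower limit is correctly $1$ (the empty partition is excluded since $k\geq 1$ and every block is nonempty). Both are immediate. The statement is essentially just a generating-function reformulation of Proposition 2.2(1)--(2), in the sense that $T_k(1)=\sum_r S_{kr}=b_k$ and $T_k'(1)=\sum_r rS_{kr}=s_k$ recover the two quantities appearing in Proposition 2.3.
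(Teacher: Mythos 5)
Your proof is correct and is exactly the unwinding of definitions that the paper has in mind (its own proof is simply ``this is clear from definitions''): the diagonal entries are $t^{|\pi\vee\pi|}=t^{|\pi|}$, and grouping by the number of blocks gives the Stirling expansion. Nothing further is needed.
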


\begin{proof}
This is clear from definitions.
\end{proof}

Another interpretation of the trace, analytic this time, is as follows.

\begin{proposition}
For any $t\in(0,1]$ we have the formula
$$T_k(t)=\lim_{n\to\infty}\int_{G_n^\times}\chi_t^k$$
where $\chi_t=\sum_{i=1}^{[tn]}u_{ii}$ are the truncated characters of the quantum group.
\end{proposition}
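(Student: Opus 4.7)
The plan is to expand $\chi_t^k$ as an iterated sum and feed the result into the Weingarten formula of Theorem 1.3, reducing everything to a trace of a product of a Gram-type matrix against the Weingarten matrix, and then use the asymptotic estimates of Proposition 2.6 to identify which terms survive in the large-$n$ limit.

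Concretely, with $m=[tn]$, expanding gives
$$\int_{G_n^\times}\chi_t^k=\sum_{i_1,\ldots,i_k=1}^{m}\int_{G_n^\times}u_{i_1i_1}\ldots u_{i_ki_k}\,du
=\sum_{\pi,\sigma\in\mathcal P_g^\times(k)}W_{kn}(\pi,\sigma)\sum_{i_1,\ldots,i_k=1}^m\delta_\pi(i)\delta_\sigma(i).$$
The inner sum counts multi-indices $(i_1,\ldots,i_k)\in\{1,\ldots,m\}^k$ that are constant on the blocks of $\pi\vee\sigma$, so it equals $m^{|\pi\vee\sigma|}$. Hence
$$\int_{G_n^\times}\chi_t^k=\sum_{\pi,\sigma}m^{|\pi\vee\sigma|}W_{kn}(\pi,\sigma).$$

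Now I would invoke Proposition 2.6: for arbitrary $\pi,\sigma$ one has $W_{kn}(\pi,\sigma)=O(n^{|\pi\vee\sigma|-|\pi|-|\sigma|})$, so the generic term is $O(n^{2|\pi\vee\sigma|-|\pi|-|\sigma|})$ (since $m\sim tn$). Because $|\pi\vee\sigma|\leq\min(|\pi|,|\sigma|)$, the exponent $2|\pi\vee\sigma|-|\pi|-|\sigma|$ is $\leq 0$, with equality exactly when $|\pi\vee\sigma|=|\pi|=|\sigma|$, i.e.\ when $\pi=\sigma$. Thus all off-diagonal contributions vanish in the limit.

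For the surviving diagonal terms, the refined estimate in Proposition 2.6 (applied with $\pi=\sigma$ and using $\mu(\pi,\pi)=1$) gives $W_{kn}(\pi,\pi)=n^{-|\pi|}(1+O(n^{-1}))$, while $m^{|\pi|}=(tn)^{|\pi|}(1+O(n^{-1}))$. Multiplying, $m^{|\pi|}W_{kn}(\pi,\pi)\to t^{|\pi|}$, and summing over $\pi\in\mathcal P_g^\times(k)$ yields $\sum_\pi t^{|\pi|}$, which by Proposition 2.5 is precisely $T_k(t)$. The only real obstacle is the uniform control of error terms, which is non-trivial in general but is exactly the content of Proposition 2.6; once that is in hand, term-by-term convergence suffices since $\mathcal P(k)$ is finite.
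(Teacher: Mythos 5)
Your argument is correct and is exactly the standard Weingarten-formula computation that the paper's proof delegates to \cite{bsp}, \cite{ez1}: expand the moment, identify the inner sum over indices as $m^{|\pi\vee\sigma|}$ with $m=[tn]$, kill the off-diagonal terms using $W_{kn}(\pi,\sigma)=O(n^{|\pi\vee\sigma|-|\pi|-|\sigma|})$ together with $|\pi\vee\sigma|=|\pi|=|\sigma|\Leftrightarrow\pi=\sigma$, and extract $t^{|\pi|}$ from each diagonal term. (The asymptotic estimates you cite as Proposition 2.6 are the ones stated in Proposition 1.5 of the paper, and the identity $\sum_\pi t^{|\pi|}=T_k(t)$ is Proposition 2.4, but the content is right.)
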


\begin{proof}
As explained in \cite{bsp}, \cite{ez1}, this follows from the Weingarten formula.
\end{proof}

In general, the Stirling numbers $S_{kr}$ and the trace $T_k(t)$ are given by quite complicated formulae, unless we are in the situation of one of the quantum groups in Proposition 2.2. Here these invariants are well-known in the $O,S$ cases, and for $H^+$ we have:
$$T_{2l}(t)=\sum_{r=1}^l\frac{1}{r}\binom{l-1}{r-1}\binom{2l}{r-1}t^r$$

See \cite{bb+}. In general now, the conceptual result concerns the asymptotic measures of truncated characters, i.e. the probability measures $\mu_t$ satisfying $T_k(t)=\int x^kd\mu_t(x)$. 

\begin{theorem}
The asymptotic measures of truncated characters are as follows:
\begin{enumerate}
\item $S_n/S_n^+$: Poisson/free Poisson.

\item $O_n/O_n^+$: Gaussian/semicircular.

\item $H_n/H_n^+$: Bessel/free Bessel.

\item $B_n/B_n^+$: shifted Gaussian/shifted semicircular.

\item $O_n^*/H_n^*$: symmetrized Rayleigh/squeezed $\infty$-Bessel.
\end{enumerate}
\end{theorem}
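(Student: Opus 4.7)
The plan is to reduce this theorem to a case-by-case moment identification, namely
$$T_k(t) \;=\; \sum_{\pi\in\mathcal P_g^\times(k)} t^{|\pi|} \;=\; \int x^k\,d\mu_t(x),$$
where the first equality is Proposition 2.4. Alternatively, combining Proposition 2.5 with the Weingarten formula of Theorem 1.3 and the asymptotics $W_{kn}(\pi,\sigma)=O(n^{|\pi\vee\sigma|-|\pi|-|\sigma|})$ of Proposition 1.5 confirms the same polynomial: only the diagonal pairs $\pi=\sigma$ contribute in the large-$n$ limit of $\int\chi_t^k$, producing again $\sum_\pi t^{|\pi|}$. The work is therefore to match this generating function, case by case, with the moments of the claimed $\mu_t$.

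For the classical and free cases, the identification is standard enumerative combinatorics. In $\mathcal P_o(2l)$ every pairing has $l$ blocks and $|\mathcal P_o(2l)|=(2l-1)!!$, giving $T_{2l}(t)=(2l-1)!!\,t^l$, the even moments of $N(0,t)$; replacing pairings by noncrossing ones yields the Catalan numbers $C_l$, i.e.\ the semicircle of variance $t$. For $S_n/S_n^+$, the polynomial $\sum_\pi t^{|\pi|}$ is the classical Bell/Touchard polynomial, which is exactly the moment sequence of $\mathrm{Poisson}(t)$/free Poisson of parameter $t$. The hyperoctahedral cases $H_n/H_n^+$ follow from the even-block restriction (producing the (free) Bessel laws), and $B_n/B_n^+$ results from adding singletons, which at the measure level corresponds to a shift by $\sqrt{t}$ of the Gaussian/semicircle.

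For $O_n^*$, a pairing in $\mathcal P_o^*(2l)$ is the same thing as a bijection from the $l$ odd positions to the $l$ even positions, so $T_{2l}(t)=l!\,t^l$; a direct integral computation identifies these with the even moments of the symmetrized Rayleigh law, whose density is proportional to $|x|\exp(-x^2/(2t))$. The $H_n^*$ case reduces to a similar but more involved enumeration, with the moment sequence matching that of the squeezed $\infty$-Bessel law.

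The step I expect to be the main obstacle is this last identification. The ``equal number of odd and even legs per block'' constraint does not collapse into a simple factorial, Bell, or Catalan count, so the generating function $\sum_r S_{kr}t^r$ for $\mathcal P_h^*$ is genuinely new, and matching it with the squeezed $\infty$-Bessel law will require either an ad hoc moment computation (e.g.\ via the Stieltjes transform) or an appeal to the half-liberation framework of \cite{ez1}, \cite{bb+}, where this measure is constructed precisely to realize such moments.
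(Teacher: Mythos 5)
Your reduction to the moment identity $T_k(t)=\sum_{\pi\in\mathcal P(k)}t^{|\pi|}=\int x^k\,d\mu_t(x)$ is exactly the content of Propositions 2.4 and 2.5, and your large-$n$ argument via the Weingarten asymptotics correctly isolates the diagonal terms. From there the paper takes a slightly different route: it reads $\sum_\pi t^{|\pi|}$ as the classical or free moment--cumulant formula with all admissible cumulants equal to $t$, so each measure is identified by its cumulant sequence rather than by a case-by-case moment match; this is what makes the liberation pattern (same cumulants, classical versus free) visible at a glance and lets the paper dispose of all cases in one line with a reference to \cite{bsp}, \cite{ez1}, \cite{ez2}. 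Your enumerations are right where you carry them out ($(2l-1)!!$, Catalan, Bell/Touchard, $l!$ for $O_n^*$), but two constants need fixing: for $B_n$ the mean of $\mu_t$ is $T_1(t)=t$, so the variance-$t$ Gaussian is shifted by $t$, not by $\sqrt t$ (compare $T_3(t)=t^3+3t^2$ with $E[(g+c)^3]$); and the symmetrized Rayleigh density matching the moments $l!\,t^l$ is $\frac{|x|}{t}e^{-x^2/t}$, not proportional to $|x|e^{-x^2/(2t)}$, which would give $(2t)^l\,l!$. You are right that $H_n^*$ is the genuinely delicate case: its partitions are neither arbitrary nor noncrossing, so neither cumulant machine applies directly; the cited references handle it through the half-liberation framework (projective-version arguments analogous to $PO_n^*=PU_n$, and the $s\to\infty$ limit of the series $H_n^{(s)}$), and the ``squeezed $\infty$-Bessel'' law is essentially defined so as to have these moments, so the ad hoc analytic verification you anticipate is only needed if one insists on a closed form for the measure.
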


\begin{proof}
The one-parameter measures in the statement are best found via a direct computation, by using classical and free cumulants. See \cite{bsp}, \cite{ez1}, \cite{ez2}.
\end{proof}

\section{The basic formula}

We discuss now the explicit computation of the Gram determinants. The basic formula here, coming from the work of Jackson \cite{jac} and Lindstr\"om \cite{lin}, is as follows. 

\begin{theorem}
For $S_n,H_n,H_n^*$ we have
$$\det(G_{kn})=\prod_{\pi\in \mathcal P(k)}\frac{n!}{(n-|\pi|)!}$$
where $|.|$ is the number of blocks.
\end{theorem}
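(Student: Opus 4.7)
My plan is to exploit the classical identity
$$n^{|\rho|} \;=\; \sum_{\tau \geq \rho} \frac{n!}{(n-|\tau|)!},$$
which expresses the total count of functions on a set of $|\rho|$ elements as the sum, over the fiber-type $\tau$ (coarser than $\rho$), of the number of injective functions $(n)_{|\tau|} = n!/(n-|\tau|)!$. Applying this to $\rho = \pi \vee \sigma$ gives
$$G_{kn}(\pi,\sigma) \;=\; n^{|\pi \vee \sigma|} \;=\; \sum_{\tau \geq \pi \vee \sigma} \frac{n!}{(n-|\tau|)!} \;=\; \sum_{\tau} A(\pi,\tau)\, D(\tau)\, A(\sigma,\tau),$$
where I set $A(\pi,\tau) = 1$ if $\tau \geq \pi$ and $0$ otherwise, and $D(\tau) = n!/(n-|\tau|)!$ viewed as a diagonal matrix. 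In matrix form this is the factorization $G_{kn} = A D A^t$.

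For this factorization to make sense with indices running over $\mathcal P(k)$ rather than over the full lattice $\mathcal P_s(k)$, I need to check that the three categories in question are closed under taking coarser partitions: if $\pi \in \mathcal P(k)$ and $\tau \geq \pi$, then $\tau \in \mathcal P(k)$. For $\mathcal P_s$ (case $S_n$) there is nothing to check. For $\mathcal P_h$ (case $H_n$) it is enough to note that merging two blocks of even size produces a block of even size. For $\mathcal P_h^*$ (case $H_n^*$), merging blocks each having the same number of odd and even legs again produces a block of that type, since the counts add. With this closure in hand, the sum in the identity above can equivalently be restricted to $\tau \in \mathcal P(k)$, and $A,D$ are honest matrices indexed by $\mathcal P(k)$.

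To conclude, I order the index set $\mathcal P(k)$ by any total order refining the partial order $\leq$ of partitions. With this ordering $A$ is lower unitriangular (its diagonal entries $A(\pi,\pi)=1$, and $A(\pi,\tau)=0$ unless $\tau \geq \pi$), so $\det A = 1$. Therefore
$$\det(G_{kn}) \;=\; \det(A)\det(D)\det(A^t) \;=\; \det(D) \;=\; \prod_{\tau \in \mathcal P(k)} \frac{n!}{(n-|\tau|)!},$$
which is the claimed formula. The only non-routine step is the closure verification of the previous paragraph, which is where the three cases $S_n,H_n,H_n^*$ are singled out from the other easy quantum groups (for example, in $\mathcal P_o$ the join of two pairings is generally not a pairing, and the same triangular decomposition fails).
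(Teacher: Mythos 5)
Your proof is correct, and it is essentially the paper's argument in a symmetrized form: the identity $n^{|\rho|}=\sum_{\tau\geq\rho}n!/(n-|\tau|)!$ is exactly the combinatorial content of the paper's M\"obius inversion step, and your factorization $G_{kn}=ADA^t$ with $A$ the unitriangular zeta matrix is equivalent to the paper's row reduction of $G_{kn}$ by the M\"obius matrix $A^{-1}$, both yielding the same triangular structure and the same diagonal product. Your explicit check that $\mathcal P(k)$ is upward closed in $\mathcal P_s(k)$ for the three categories in question is a welcome detail that the paper leaves implicit in its appeal to the semilattice property.
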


\begin{proof}
We use the fact that the partitions have the property of forming semilattices under $\vee$. The proof uses the upper triangularization procedure in \cite{lin} together with the explicit knowledge of the M\"obius function on $\mathcal P(k)$ as in \cite{jac}. Consider the following matrix, obtained by making determinant-preserving operations:
$$G_{kn}'(\pi,\sigma)=\sum_{\pi\leq\tau}\mu(\pi,\tau)n^{|\tau\vee\sigma|}$$

It follows from the M\"obius inversion formula that we have:
$$G_{kn}'(\pi,\sigma)=
\begin{cases}
n(n-1)\ldots(n-|\sigma|+1)&{\rm if}\ \pi\leq\sigma\\
0&{\rm if\ not}
\end{cases}$$

Thus the matrix is upper triangular, and by computing the product on the diagonal we obtain the formula in the statement.
\end{proof}

A first remarkable feature of the above result is that the determinant for $S_n,H_n,H_n^*$ can be computed from the trace: indeed, the Gram trace gives the Stirling numbers, which in turn give the Gram determinant. However, the connecting formula is quite complicated, so let us just record here an improvement of the first estimate in Proposition 2.3.

\begin{proposition}
With $D_k(n)=\det(G_{kn})$ and $T_k(t)=Tr(G_{kt})$ we have
$$D_k(n)=n^{s_k}\left(1-\frac{z_k}{2}\,n^{-1}+O(n^{-2})\right)$$
where $s_k=T_k'(1)$ and $z_k=T_k''(1)$.
\end{proposition}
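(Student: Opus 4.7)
The statement sits right after Theorem 3.1, so the plan is to leverage the explicit product formula $D_k(n) = \prod_{\pi \in \mathcal P(k)} n(n-1)\cdots(n-|\pi|+1)$ and expand it asymptotically as $n\to\infty$. First I would factor $n^{|\pi|}$ out of each block of the product, writing
$$D_k(n) = n^{\sum_\pi |\pi|} \prod_{\pi \in \mathcal P(k)} \prod_{j=1}^{|\pi|-1}\left(1 - \frac{j}{n}\right) = n^{s_k} \prod_{\pi \in \mathcal P(k)} \prod_{j=1}^{|\pi|-1}\left(1 - \frac{j}{n}\right),$$
where the identity $s_k = \sum_\pi |\pi| = T_k'(1)$ is immediate from Proposition 2.5.

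Next I would take logarithms and expand $\log(1 - j/n) = -j/n + O(n^{-2})$, yielding
$$\log\left(D_k(n)/n^{s_k}\right) = -\frac{1}{n}\sum_{\pi \in \mathcal P(k)} \sum_{j=1}^{|\pi|-1} j + O(n^{-2}) = -\frac{1}{2n}\sum_{\pi \in \mathcal P(k)} |\pi|(|\pi|-1) + O(n^{-2}).$$
The key identification is then that, by Proposition 2.5,
$$\sum_{\pi \in \mathcal P(k)} |\pi|(|\pi|-1) = \sum_{r=1}^k r(r-1)S_{kr} = T_k''(1) = z_k.$$
Exponentiating and using $e^x = 1 + x + O(x^2)$ gives exactly the claimed expansion $D_k(n) = n^{s_k}\left(1 - \frac{z_k}{2}n^{-1} + O(n^{-2})\right)$.

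There is essentially no obstacle here; the only thing to check carefully is the bookkeeping between the partition sum $\sum_\pi |\pi|(|\pi|-1)$ and the second derivative of $T_k$, but this is a direct application of Proposition 2.5. Note that the same approach yields, with no extra effort, all higher-order terms of the $1/n$ expansion in terms of the power sums $\sum_\pi |\pi|^r$, i.e.\ in terms of the values $T_k^{(r)}(1)$, so the proposition is really just the second term of an infinite asymptotic series.
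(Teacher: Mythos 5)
Your proof is correct and is essentially the paper's own argument: both start from the product formula of Theorem 3.1, expand each falling factorial as $n^{r}\bigl(1-\tfrac{r(r-1)}{2}n^{-1}+O(n^{-2})\bigr)$, and identify $\sum_{\pi}|\pi|(|\pi|-1)=\sum_r r(r-1)S_{kr}=T_k''(1)$ via Proposition 2.5. The only cosmetic difference is that you pass through logarithms and index the product by partitions, while the paper groups terms by the Stirling numbers $S_{kr}$ and multiplies the expansions directly.
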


\begin{proof}
In terms of Stirling numbers, the formula in Theorem 3.1 reads:
$$D_k(n)=\prod_{r=1}^k\left(\frac{n!}{(n-r)!}\right)^{S_{kr}}$$

We use now the following basic estimate:
$$\frac{n!}{(n-r)!}=n^r\prod_{s=1}^{r-1}\left(1-\frac{s}{n}\right)=n^r\left(1-\frac{r(r-1)}{2}\,n^{-1}+O(n^{-2})\right)$$

Together with $T_k(t)=\sum_{r=1}^kS_{kr}t^r$, this gives the result. 
\end{proof}

The above discussion raises the general question on whether the Gram determinant can be computed or not from the Gram trace, or from the measures in Theorem 2.6. 

Since the connecting formula for $S_n,H_n,H_n^*$ is already quite complicated, let us formulate for the moment a more modest conjecture, as follows.

\begin{conjecture}
For any easy quantum group we have a formula of type
$$\det(G_{kn})=\prod_{\pi\in \mathcal P(k)}\varphi(\pi)$$
with the ``contributions'' being given by an explicit function $\varphi:\mathcal P(k)\to\mathbb Q(n)$.
\end{conjecture}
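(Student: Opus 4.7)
The plan is a case-by-case extension of the upper-triangularization method behind Theorem 3.1. For each category $\mathcal P$ I would look for a partial order $\preceq$ on $\mathcal P(k)$ under which the transformed matrix
$$G'_{kn}(\pi,\sigma) = \sum_{\pi \preceq \tau} \mu_{\mathcal P}(\pi,\tau)\, n^{|\tau \vee \sigma|}$$
becomes upper triangular, with $\mu_{\mathcal P}$ the M\"obius function of $(\mathcal P(k), \preceq)$; the diagonal of $G'_{kn}$ then defines the contributions $\varphi(\pi)$. For the classical categories $\mathcal P_s, \mathcal P_h, \mathcal P_h^*$ this is refinement and recovers the known $\varphi(\pi) = n!/(n-|\pi|)!$. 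For the noncrossing categories $\mathcal P_s^+, \mathcal P_o^+, \mathcal P_b^+, \mathcal P_h^+$ the natural candidate is the Kreweras-compatible refinement order on $NC(k)$, whose M\"obius function has a known product form in terms of Catalan numbers.

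Second, I would cross-check against the explicit formulas already in the literature and recalled in the introduction: for $O_n, B_n, O_n^*$ against the Collins-Matsumoto \cite{cma} and Zinn-Justin \cite{zin} expressions, which are naturally indexed by Young diagrams $\lambda$, so a preliminary step is to collapse them to partition-indexed contributions via the surjection $\pi \mapsto \lambda(\pi)$; and for $O_n^+, B_n^+, S_n^+, H_n^+$ against the evaluations of Di Francesco-Golinelli-Guitter \cite{df1,df2,dg1,dg2}, Tutte \cite{tut} and Dahab \cite{dah}. Theorem 2.6 suggests that in the free case $\varphi(\pi)$ should be a rational function in Chebyshev-type evaluations of $n$, reflecting the semicircular/free Bessel asymptotics of truncated characters, and a recursion along the block structure of a noncrossing partition looks the most promising route for producing the product form.

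The main obstacle, and the reason this is stated as a conjecture rather than a theorem, is twofold. For the categories $\mathcal P_o, \mathcal P_b, \mathcal P_o^*$ the set $\mathcal P(k)$ is not a semilattice under $\vee$ in the way required by the Jackson-Lindstr\"om trick, so any M\"obius-inversion scheme can at best deliver block triangularity, and one still has to prove that the individual blocks themselves factorize. Even where a product formula holds on the nose, as for $O_n^+$, the canonical combinatorial index set is Young diagrams or planar maps rather than partitions, so assembling per-partition contributions $\varphi(\pi)$ requires a choice that is not canonical --- precisely the vagueness the conjecture itself flags. The hard part is therefore not computing the determinant but \emph{defining} $\varphi$ so that the induction-restriction compatibility between $G_n$ and $G_n^+$ demanded in the introduction is automatic; I expect that fresh input from planar-algebra or matrix-model methods will be needed to pin down the correct per-partition assignment.
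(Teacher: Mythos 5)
The statement you are addressing is a conjecture, and the paper offers no proof of it --- only partial evidence --- so the right standard here is whether your programme matches that evidence and whether your proposed mechanisms would actually produce it. You correctly decline to claim a proof, and your account of the classical side is accurate: the M\"obius-inversion triangularization you describe is exactly the paper's Theorem 3.1 and works precisely for $S_n,H_n,H_n^*$ because those $\mathcal P(k)$ are closed under $\vee$; your diagnosis that the pairing categories fail this and that the $O_n,B_n,O_n^*$ formulae of Collins--Matsumoto and Zinn-Justin are indexed by Young diagrams rather than partitions is also exactly the paper's position, as is your identification of the missing surjection as the open point.

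Where your proposal genuinely diverges is the free case. You suggest triangularizing via the M\"obius function of a refinement-type order on $NC(k)$. The paper does something different and, for this purpose, stronger: for $O_n^+$ (Theorem 5.1) it realizes $G_{kn}(\pi,\sigma)=\langle f_\pi,f_\sigma\rangle$ via the bipartite-graph planar algebra on the Cayley graph $\mathbb N$, obtains a Cholesky-type factorization $G_{kn}=T_{kn}T_{kn}^t$ with $T_{kn}$ lower triangular with respect to the \emph{height-function} order $\pi\leq\sigma\iff h_\pi(i)\leq h_\sigma(i)$ (not refinement), and reads off diagonal entries that are ratios $\sqrt{P_r/P_{r-1}}$ of Chebycheff polynomials; $B_n^+,S_n^+,H_n^+$ then follow by the isomorphisms $B_n^+\simeq O_{n-1}^+$ and cabling. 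A refinement-order M\"obius inversion on $NC(k)$ would give diagonal entries depending only on lattice-theoretic data of each $\pi$, and it is not clear this can reproduce the Chebycheff factors $P_r(n)^{d_{k/2,r}}$ with exponents $d_{kr}=f_{kr}-f_{k+1,r}$; the paper's Section 8 shows the natural index set for the free contributions is the set of ``epi'' $\mathcal P^+(k)$, which has a different cardinality from $\mathcal P(k)$, so even after the correct factorization one is still not decomposing over partitions. Your closing paragraph does flag this vagueness, but you should be aware that the concrete mechanism you propose for the noncrossing categories is not the one that is known to work, and that the paper's own evidence for the free case rests on the planar-algebra factorization rather than on any poset M\"obius function.
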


This statement is of course quite vague, depending of the meaning of the above word ``explicit''. As already mentioned, one would expect $\varphi$ to come from the Gram trace, or from the Stirling numbers, or, even better, from the measures in Theorem 2.6.  Such a decomposition could potentially clarify the behavior of the Gram determinants under the ``liberation'' procedure $G \to G^+$.

This kind of general question appears to be quite difficult. In what follows we will obtain some evidence towards such general decomposition results.

\section{The orthogonal case}

We discuss now the cases $O,B,O^*$. Here the combinatorics is that of the Young diagrams. We denote by $|.|$ the number of boxes, and we use quantity $f^\lambda$, which gives the number of standard Young tableaux of shape $\lambda$.

\begin{theorem}
For $O_n$ we have
$$\det(G_{kn})=\prod_{|\lambda|=k/2}f_n(\lambda)^{f^{2\lambda}}$$
where $f_n(\lambda)=\prod_{(i,j)\in\lambda}(n+2j-i-1)$. 
\end{theorem}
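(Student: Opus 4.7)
The plan is to use orthogonal Schur--Weyl duality to diagonalize the Gram matrix by means of the symmetric group action on $(\mathbb C^n)^{\otimes k}$. First, I would realize $G_{kn}$ concretely as the Gram matrix of an $O_n$-invariant subspace. To each pair partition $\pi\in\mathcal P_o(k)$ associate the tensor $T_\pi=\sum_{\ker i\geq\pi}e_{i_1}\otimes\cdots\otimes e_{i_k}\in(\mathbb C^n)^{\otimes k}$, and observe that $\langle T_\pi,T_\sigma\rangle=n^{|\pi\vee\sigma|}$. By Weyl's first fundamental theorem for $O_n$, the family $\{T_\pi:\pi\in\mathcal P_o(k)\}$ spans $((\mathbb C^n)^{\otimes k})^{O_n}$ and is linearly independent for $n\geq k$, so $G_{kn}$ is literally the Gram matrix of the natural inner product on this invariant subspace.

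Next, I would diagonalize the form using the $S_k$-action by tensor-factor permutation, which commutes with $O_n$ and preserves the inner product. Applying the branching $GL_n\downarrow O_n$ to the classical Schur--Weyl decomposition, together with the fact that $(V^{GL_n}_\lambda)^{O_n}$ is one-dimensional precisely when $\lambda$ has all even rows and zero otherwise, yields the multiplicity-free isomorphism
$$((\mathbb C^n)^{\otimes k})^{O_n}\;\cong_{S_k}\;\bigoplus_{|\lambda|=k/2}V^{2\lambda}_{S_k}$$
for $n$ large enough, where $2\lambda=(2\lambda_1,2\lambda_2,\ldots)$. Since the form is $S_k$-invariant and each isotypic appears with multiplicity one, Schur's lemma forces it to act on each $V^{2\lambda}_{S_k}$ as a scalar $c_\lambda(n)\in\mathbb Q(n)$, which gives immediately
$$\det G_{kn}=\prod_{|\lambda|=k/2}c_\lambda(n)^{f^{2\lambda}}.$$

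It remains to identify $c_\lambda(n)=f_n(\lambda)$, which I expect to be the main obstacle. Both sides are monic polynomials in $n$ of degree $k/2$, the monicity of $c_\lambda$ being fixed by a Proposition 2.1 style leading-coefficient computation performed isotypic-component-wise, so it suffices to match their zeros. At an integer $n_0$, a zero of $c_\lambda(n)$ corresponds to the collapse of the $V^{2\lambda}_{S_k}$ isotypic in $((\mathbb C^{n_0})^{\otimes k})^{O_{n_0}}$, i.e., to the vanishing of the orthogonal irreducible attached to $\lambda$; by the Weyl dimension formula for the orthogonal group these vanishings occur exactly at $n_0=i-2j+1$ for $(i,j)\in\lambda$, matching the zeros of $f_n(\lambda)$. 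An alternative, more direct route, taken by Collins and Matsumoto \cite{cma}, is to evaluate the form on a distinguished vector in each isotypic (built from a Young symmetrizer applied to a ``nested'' pair partition) and to recognize the result as the orthogonal content product $f_n(\lambda)$; this bypasses the root-matching step but requires knowing the eigenvalues of the Brauer algebra Jucys--Murphy elements on standard tableaux of shape $2\lambda$.
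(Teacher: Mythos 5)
Your overall strategy coincides with the paper's, which simply quotes from Zinn--Justin and Collins--Matsumoto the diagonalization $G_{kn}=\sum_{|\lambda|=k/2}f_n(\lambda)P_{2\lambda}$ with $Tr(P_{2\lambda})=f^{2\lambda}$: multiplicity-free $S_k$-module, Schur's lemma, eigenvalues computed via Jucys--Murphy elements. But one step as you state it is not correct. The passage ``the form acts on each $V^{2\lambda}_{S_k}$ as a scalar $c_\lambda(n)$, which gives immediately $\det G_{kn}=\prod c_\lambda^{f^{2\lambda}}$'' fails for a general basis of the invariant subspace: the Gram determinant in a basis $\{T_\pi\}$ is $|\det M|^2$ times the product of the scalars, where $M$ is the change of basis to an orthonormal basis adapted to the isotypic decomposition, and nothing in what you wrote forces $|\det M|=1$. (Toy example: $S_2$ acting on $\mathbb C^2$ by the swap, basis $T_1=2(e_1+e_2)$, $T_2=e_1-e_2$; the form is standard, both isotypic scalars equal $1$, yet the Gram determinant is $16$.) What rescues the argument is a property of your vectors that you never invoke: $S_k$ \emph{permutes} them, $g\cdot T_\pi=T_{g\pi}$, equivalently the matrix $n^{|\pi\vee\sigma|}$ commutes with the permutation representation of $S_k$ on $\mathbb C^{\mathcal P_o(k)}$ because $|g\pi\vee g\sigma|=|\pi\vee\sigma|$. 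Hence $G_{kn}$, as an operator on the abstract diagram space, equals $\sum_\lambda c_\lambda(n)P_{2\lambda}$ with $P_{2\lambda}$ the orthogonal isotypic projectors of the permutation module on pair partitions, and $\det G_{kn}=\prod_\lambda c_\lambda(n)^{f^{2\lambda}}$ follows honestly. Working on the abstract diagram space also removes the awkwardness that $(\mathbb C^n)^{\otimes k}$, and therefore any change-of-basis matrix inside it, varies with $n$.

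The second soft spot is the identification $c_\lambda(n)=f_n(\lambda)$ by ``matching zeros'': $c_\lambda$ has $k/2$ roots counted with multiplicity and the proposed roots $n_0=i-2j+1$ may coincide, and the claim that $c_\lambda(n_0)=0$ exactly when the orthogonal irreducible attached to $\lambda$ ``vanishes'' has no clear meaning at the small or negative integers where these roots actually sit; degeneracy of the Brauer--Gram form at such parameters is a genuinely delicate question, not a consequence of the Weyl dimension formula as stated. So your argument really rests on the alternative route you mention, the Jucys--Murphy eigenvalue computation of Collins and Matsumoto \cite{cma} --- which is precisely the reference the paper's own two-line proof defers to. With the permutation-basis point made explicit and the eigenvalue identification acknowledged as a citation, your write-up becomes a more detailed version of the paper's proof rather than a different one.
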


\begin{proof}
This follows from the results of Collins and Matsumoto \cite{cma} and Zinn-Justin \cite{zin}. Indeed, it is known from \cite{zin} that the Gram matrix is diagonalizable, as follows:
$$G_{kn}=\sum_{|\lambda|=k/2}f_n(\lambda)P_{2\lambda}$$

Here $1=\Sigma P_{2\lambda}$ is the standard partition of unity associated to the Young diagrams having $k/2$ boxes, and the coefficients $f_n(\lambda)$ are those in the statement. Now since we have $Tr(P_{2\lambda})=f^{2\lambda}$, this gives the result.
\end{proof}

\begin{theorem}
For $B_n$ we have
$$\det(G_{kn})=n^{a_k}\prod_{|\lambda|\leq k/2}f_n(\lambda)^{\binom{k}{2|\lambda|}f^{2\lambda}}$$
where $a_k=\sum_{\pi\in \mathcal P(k)}(2|\pi|-k)$, and $f_n(\lambda)=\prod_{(i,j)\in\lambda}(n+2j-i-2)$.
\end{theorem}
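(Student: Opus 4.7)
The plan is to exploit the identification $B_n \cong O_{n-1}$: every matrix in $B_n$ fixes $\mathbf{1}=(1,\ldots,1)$, so $V_{B_n}=\mathbb{C}\mathbf{1}\oplus W$ with $W$ the $(n-1)$-dimensional orthogonal complement on which $B_n$ acts as $O_{n-1}$. I would reduce $\det(G_{kn}^B)$ to a product of $O_{n-1}$ Gram determinants, to which Theorem 4.1 applies, the factor $n^{a_k}$ emerging from how the $\mathbf{1}$-directions rescale in the change of basis.

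Set $w_i=e_i-\tfrac{1}{n}\mathbf{1}\in W$, so that $\sum_i w_i=0$ and the key identity
$$\sum_i e_i\otimes e_i = \tfrac{1}{n}\,\mathbf{1}\otimes\mathbf{1} + \sum_i w_i\otimes w_i$$
holds. For $\pi=(S,\rho)\in\mathcal{P}_b(k)$ with singleton set $S$ and pairing $\rho$ on $S^c$, the partition vector $\xi_\pi$ factors as $\mathbf{1}$'s at singleton positions, tensored with pairing vectors $\sum_i e_i\otimes e_i$ at each pair of $\rho$. Substituting the identity above into each paired factor yields
$$\xi_\pi = \sum_{T\subseteq \rho} n^{|T|-|\rho|}\,\tilde\xi_{(A_T,T)},$$
where $A_T=\bigcup T$ and $\tilde\xi_{(A,\tau)}$ is the vector with $\mathbf{1}$ at positions outside $A$ and the $W$-pairing vector of $\tau$ at positions in $A$. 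These new vectors, indexed by even-size subsets $A\subseteq\{1,\ldots,k\}$ together with pairings $\tau$ on $A$, are in bijection with $\mathcal{P}_b(k)$ via $(A,\tau)\leftrightarrow(A^c,\tau)$.

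In the $\tilde\xi$-basis the Gram matrix is block-diagonal: cross-terms between different $A$'s vanish since $\mathbf{1}\perp W$, and for fixed $A$ with $|A|=2m$ the block equals $n^{k-2m}\,G_{2m,n-1}^O$ (the $n^{k-2m}$ from $\langle\mathbf{1},\mathbf{1}\rangle=n$ at the $k-2m$ inert positions, and the $G^O$ block because $\sum_i w_i\otimes w_i$ is exactly the canonical $O_{n-1}$-pairing vector on $W$). Ordering indices by $|\rho|$ decreasing, the transition matrix $M$ is upper triangular with $1$'s on the diagonal, so $\det M=1$. Therefore
$$\det(G_{kn}^B)=\prod_m \det\bigl(n^{k-2m}G_{2m,n-1}^O\bigr)^{\binom{k}{2m}}=n^{a_k}\prod_m \det(G_{2m,n-1}^O)^{\binom{k}{2m}},$$
where the collected $n$-exponent $\sum_m \binom{k}{2m}(k-2m)(2m-1)!!$ equals $a_k$ because $2|\pi|-k$ is precisely the number of singletons of $\pi$.

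Finally, applying Theorem 4.1 with $n-1$ in place of $n$ gives $\det(G_{2m,n-1}^O)=\prod_{|\lambda|=m}f_{n-1}^O(\lambda)^{f^{2\lambda}}$, and the shift $f_{n-1}^O(\lambda)=\prod_{(i,j)\in\lambda}(n+2j-i-2)=f_n^B(\lambda)$ produces the statement. The chief technical obstacle is verifying the block-diagonal form of the $\tilde\xi$-Gram matrix together with the triangularity of $M$; both reduce to the identity $\sum_i w_i=0$ and a careful bookkeeping of which tensor positions are promoted from $\mathbb{C}\mathbf{1}$ to $W$ when expanding $\xi_\pi$.
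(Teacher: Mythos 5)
Your proposal is correct and follows essentially the same route as the paper: both rest on the isomorphism $B_n\simeq O_{n-1}$ coming from $u=v+1$ (i.e.\ $\mathbb{C}^n=\mathbb{C}\mathbf{1}\oplus W$), which turns the Gram determinant into $n^{a_k}$ times a binomially weighted product of $O_{n-1}$ Gram determinants, to which Theorem 4.1 is then applied with $n$ shifted to $n-1$. The paper states this reduction in one line at the level of fixed-point spaces, whereas you supply the explicit unitriangular change of basis, the block-diagonalization, and the verification that $\sum_m\binom{k}{2m}(k-2m)(2m-1)!!=a_k$ --- all of which check out.
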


\begin{proof}
We recall from \cite{bsp} that we have an isomorphism $B_n\simeq O_{n-1}$, given by $u=v+1$, where $u,v$ are the fundamental representations of $B_n,O_{n-1}$. We get:
$$Fix(u^{\otimes k})
=Fix\left((v+1)^{\otimes k}\right)
=Fix\left(\sum_{r=0}^k\binom{k}{r}v^{\otimes r}\right)$$

Now if we denote by ${\rm det}',f'$ the objects in Theorem 4.1, we obtain:
$$\det(G_{kn})
=n^{a_k}\prod_{r=1}^k{\rm det}'(G_{r,n-1})^{\binom{k}{r}}
=n^{a_k}\prod_{r=1}^k\left(\prod_{|\lambda|=r/2}f'_{n-1}(\lambda)^{f^{2\lambda}}\right)^{\binom{k}{r}}$$

This gives the formula in the statement.
\end{proof}

\begin{theorem}
For $O_n^*$ we have
$$\det(G_{kn})=\prod_{|\lambda|=k/2}f_n(\lambda)^{{f^\lambda}^2}$$
where $f_n(\lambda)=\prod_{(i,j)\in\lambda}(n+j-i)$.
\end{theorem}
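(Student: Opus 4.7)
The plan is to identify the $O_n^*$ Gram matrix with the Gram matrix governing the unitary Weingarten calculus at level $l=k/2$, and then to diagonalize the latter via Schur--Weyl duality.

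First, I would set up a bijection $\mathcal P_o^*(2l)\simeq S_l$. Every pairing $\pi\in\mathcal P_o^*(2l)$ matches each of the $l$ odd positions $\{1,3,\ldots,2l-1\}$ to exactly one even position in $\{2,4,\ldots,2l\}$, and thus corresponds to a permutation $\alpha_\pi\in S_l$. A direct combinatorial check — tracing the blocks of $\pi\vee\sigma$ around the natural alternation between $\pi$-edges and $\sigma$-edges — shows that
$$|\pi\vee\sigma|=\#(\alpha_\pi\alpha_\sigma^{-1}),$$
where $\#$ denotes the number of cycles. Under this identification the Gram matrix becomes $G_{kn}(\alpha,\beta)=n^{\#(\alpha\beta^{-1})}$, which is precisely the matrix appearing in the $U_n$-Weingarten formula at level $l$. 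This also matches the count $b_{2l}=l!$ from Proposition 2.2(1).

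Next, I would diagonalize this matrix. Since $\alpha\mapsto n^{\#(\alpha)}$ is a class function, the element $T=\sum_{\alpha\in S_l}n^{\#(\alpha)}\alpha$ is central in $\mathbb C[S_l]$. Under the Artin--Wedderburn decomposition $\mathbb C[S_l]\simeq\bigoplus_{|\lambda|=l}\mathrm{End}(V_\lambda)$, left multiplication by $T$ — whose matrix in the group basis is exactly $G_{kn}$ — acts on each isotypic block $V_\lambda\otimes V_\lambda^*$ as the scalar
$$\omega_\lambda(T)=\frac{1}{f^\lambda}\sum_{\alpha\in S_l}n^{\#(\alpha)}\chi^\lambda(\alpha).$$

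Finally, I would invoke the classical identity
$$\sum_{\alpha\in S_l}n^{\#(\alpha)}\chi^\lambda(\alpha)=l!\,s_\lambda(1^n)=f^\lambda\prod_{(i,j)\in\lambda}(n+j-i),$$
which follows from the Frobenius character formula $s_\lambda=\sum_\mu z_\mu^{-1}\chi^\lambda_\mu p_\mu$ together with the hook-content evaluation of $s_\lambda$ at $x_1=\cdots=x_n=1$. Hence the eigenvalue of $G_{kn}$ on the $\lambda$-isotypic component equals $f_n(\lambda)$, with multiplicity $\dim(V_\lambda\otimes V_\lambda^*)=(f^\lambda)^2$, and taking the product yields the claimed formula.

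The main obstacle I expect is the combinatorial identity $|\pi\vee\sigma|=\#(\alpha_\pi\alpha_\sigma^{-1})$, rather than the diagonalization step which is by now standard. The join is taken in the full partition lattice $\mathcal P_s$, and one must verify that superimposing two $\mathcal P_o^*$-pairings on $2l$ points produces connected components matching exactly the cycle structure of the composed bijection between odd and even legs — a feature specific to the $\mathcal P_o^*$ constraint that fails for arbitrary pairings.
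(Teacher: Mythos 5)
Your proof is correct, and it reaches the paper's formula by the same two-stage route — identify the $O_n^*$ Gram matrix with the $U_n$ one, then diagonalize — but it replaces both of the paper's citation-based steps with self-contained arguments. Where you establish the bijection $\mathcal P_o^*(2l)\simeq S_l$ and verify $|\pi\vee\sigma|=\#(\alpha_\pi\alpha_\sigma^{-1})$ directly (correctly: the blocks of $\pi\vee\sigma$ are the alternating cycles of the two matchings, which under the odd/even identification are exactly the cycles of $\alpha_\sigma^{-1}\alpha_\pi$, conjugate to $\alpha_\pi\alpha_\sigma^{-1}$), the paper instead invokes the isomorphism of projective versions $PO_n^*=PU_n$ from Banica--Vergnioux to conclude that the Gram matrices for $O_n^*$ and $U_n$ coincide. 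And where you diagonalize $G_{kn}$ as left multiplication by the central element $T=\sum_\alpha n^{\#(\alpha)}\alpha$ in $\mathbb C[S_l]$, computing the eigenvalues via $l!\,s_\lambda(1^n)=f^\lambda\prod_{(i,j)\in\lambda}(n+j-i)$, the paper simply cites Zinn-Justin's spectral decomposition $G_{kn}=\sum_\lambda f_n(\lambda)P_\lambda$ with $\mathrm{Tr}(P_\lambda)=(f^\lambda)^2$. Your version is longer but fully elementary, needing only symmetric-function identities; the paper's version is shorter and places the result in the quantum-group context (the projective isomorphism being the conceptual reason the $U_n$ combinatorics appears), at the cost of leaning on two external references. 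Both are complete proofs; yours has the merit of making the key combinatorial identity explicit, which is indeed the only point where the $\mathcal P_o^*$ constraint is genuinely used.
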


\begin{proof}
We use the isomorphism of projective versions $PO_n^*=PU_n$, established in \cite{bve}. This isomorphism shows that the Gram matrices for $O_n^*$ are the same as those for $U_n$. But for $U_n$ it is known from \cite{zin} that the Gram matrix is diagonalizable, as follows:
$$G_{kn}=\sum_{|\lambda|=k/2}f_n(\lambda)P_\lambda$$

Here $1=\Sigma P_\lambda$ is the standard partition of unity associated to the Young diagrams having $k/2$ boxes, and the coefficients $f_n(\lambda)$ are those in the statement. Now since we have $Tr(P_\lambda)={f^\lambda}^2$, this gives the result.
\end{proof}

Observe that the above results provide a kind of answer to Conjecture 3.3, but with the Young diagrams contributing to the determinant, instead of the partitions. The remaining problems are to find the relevant surjective map from diagrams to partitions, and to see if the above formulae further simplify by using this surjective map.

\section{Meander determinants}

In this section we discuss the computation of the Gram matrix determinant, in the free cases $O_n^+,B_n^+,S_n^+,H_n^+$. Let $P_r$ be the Chebycheff polynomials, given by $P_0=1,P_1=n$ and $P_{r+1}=nP_r-P_{r-1}$. Consider also the following numbers, depending on $k,r\in\mathbb Z$:
$$f_{kr}=\binom{2k}{k-r}-\binom{2k}{k-r-1}$$

We set $f_{kr}=0$ for $k\notin\mathbb Z$. The following key result was proved in \cite{dg1}.

\begin{theorem}
For $O_n^+$ we have
$$\det(G_{kn})=\prod_{r=1}^{[k/2]}P_r(n)^{d_{k/2,r}}$$
where $d_{kr}=f_{kr}-f_{k+1,r}$.
\end{theorem}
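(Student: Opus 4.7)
The plan is to identify the Gram matrix with the inner product on the Temperley-Lieb algebra $TL_m(n)$ for $m=k/2$, block-diagonalize via its cellular structure, and match multiplicities using the Chebyshev recursion.

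First I would reduce to $k=2m$ even, since $\mathcal P_o^+(k)=\emptyset$ otherwise, and identify $\mathcal P_o^+(2m)$ with the Temperley-Lieb basis of noncrossing perfect matchings on $2m$ points. Under this identification, the quantity $|\pi\vee\sigma|$ appearing in $G_{kn}(\pi,\sigma)=n^{|\pi\vee\sigma|}$ is exactly the number of loops produced by stacking $\pi$ on top of $\sigma$, so that $G_{kn}$ is the Markov--trace Gram form on $TL_m(n)$.

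Second, I would invoke the cellular structure of $TL_m(n)$: every noncrossing pair partition splits along a horizontal midline into two ``link patterns'' joined by $r$ through-strings, for some $0\leq r\leq m$ of the same parity as $m$, and the number of link patterns from $m$ points to $r$ through-strings is exactly $f_{m,r}=\binom{2m}{m-r}-\binom{2m}{m-r-1}$. In a basis adapted to this splitting, $G_{kn}$ becomes conjugate to a block sum $\bigoplus_r G_m^{(r)}\otimes I_{f_{m,r}}$, where $G_m^{(r)}$ is the cell-module Gram form for the defect-$r$ standard module.

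Third, I would compute each $\det G_m^{(r)}$ by induction on $m$, using the Jones--Wenzl construction. Capping an extra pair of points on a half-diagram raises or lowers the defect by one, and the induced recursion on cell-module determinants matches $P_{r+1}(n)=nP_r(n)-P_{r-1}(n)$, producing a factor of $P_r(n)$ each time the defect crosses $r$. Collecting these contributions across all blocks gives a decomposition $\det(G_{kn})=\prod_{r\geq 1}P_r(n)^{e_{m,r}}$ for explicit exponents $e_{m,r}$.

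The main obstacle is the final combinatorial identity $e_{m,r}=d_{m,r}=f_{m,r}-f_{m+1,r}$. This amounts to a telescoping between the branching multiplicities of cell modules under $TL_m\hookrightarrow TL_{m+1}$ and the recursive weights picked up from the Jones--Wenzl projectors; extracting the precise binomial-difference form is the delicate step, and is where one must borrow either the explicit determinantal reduction of Di Francesco--Golinelli--Guitter \cite{dg1} or a direct induction verifying that the exponents $e_{m,r}$ satisfy the recursion $e_{m+1,r}-e_{m,r}=f_{m+1,r+1}-f_{m+2,r+1}$ with the correct base case $e_{r,r}=1$.
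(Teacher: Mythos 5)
Your route --- realize $G_{kn}$ as the Markov-trace form on $TL_{k/2}(n)$, block-diagonalize along the through-strand filtration, and evaluate cell-module determinants by Jones--Wenzl induction --- is genuinely different from the paper's proof, which instead constructs an explicit factorization $G_{kn}=T_{kn}T_{kn}^t$ with $T_{kn}$ lower triangular, coming from the loop model on the graph $\mathbb N$ with spin vector $\mu(r)=P_r(n)$, and then identifies the exponents by a reflection-principle count of edges in loops. Your plan is essentially the original Di Francesco--Golinelli--Guitter argument, but as written it has concrete gaps. First, the multiplicity is wrong: the number of link patterns on $m$ points with $r$ defects is $\binom{m}{(m-r)/2}-\binom{m}{(m-r)/2-1}$, whereas $f_{m,r}=\binom{2m}{m-r}-\binom{2m}{m-r-1}$ counts link patterns on $2m$ points with $2r$ defects (the ``epi'' of Section 8); already $f_{2,0}^2+f_{2,2}^2=5\neq 2=\#\mathcal P_{o^+}(4)$, so your proposed block sizes do not even add up to the size of $G_{kn}$. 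Second, the Markov-trace form in a filtration-adapted basis is not $\bigoplus_r G_m^{(r)}\otimes I$: on the defect-$r$ layer the cell form enters twice (once for each half-diagram) together with the Markov weight of that block --- and it is this weight, proportional to $P_r(n)$, that actually produces the Chebycheff factors --- while the triangularity of the change of basis with respect to the defect filtration, which is what makes the reduction determinant-preserving, is precisely the nontrivial step and is only asserted. Third, the closing appeal to \cite{dg1} for the exponent identity is circular, since that identity is the content of the theorem.

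Separately, the exponent you are targeting, $d_{m,r}=f_{m,r}-f_{m+1,r}$, is a typo in the statement: the exponent actually derived in Step 4 of the paper's proof, and used consistently in Theorems 5.4 and 6.3, is $d_{m,r}=f_{m,r}-f_{m,r+1}$. The case $k=2$ settles this: $\det(G_{2n})=n=P_1(n)^1$, while $f_{1,1}-f_{2,1}=1-3=-2$ and $f_{1,1}-f_{1,2}=1-0=1$. Any induction aimed at the literal recursion $e_{m+1,r}-e_{m,r}=f_{m+1,r+1}-f_{m+2,r+1}$ would therefore fail. A cleaner way to salvage your approach is to view $\mathcal P_{o^+}(2m)$ not as the algebra $TL_m$ but as the defect-zero cell module of $TL_{2m}$: then the correct multiplicities $\dim V_{2m,2r}=f_{m,r}$ appear naturally, and the known cell-module Gram determinant $\prod_{r}(P_r/P_{r-1})^{f_{m,r}}$ telescopes to $\prod_r P_r^{f_{m,r}-f_{m,r+1}}$, which is exactly the paper's Section~8 reformulation of this theorem.
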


\begin{proof}
As already mentioned, the result is from \cite{dg1}. We present below a short proof. The result holds when $k$ is odd, all the exponents being 0, so we assume that $k$ is even.

{\bf Step 1.} We use a general formula of type $G_{kn}(\pi,\sigma)=<f_\pi,f_\sigma>$. 

Let $\Gamma$ be a locally finite bipartite graph, with distinguished vertex 0 and adjacency matrix $A$, and let $\mu$ be an eigenvector of $A$, with eigenvalue $n$. Let $L_k$ be the set of length $k$ loops $l=l_1\ldots l_k$ based at $0$, and $H_k=span(L_k)$. For $\pi\in \mathcal P_{o^+}(k)$ define $f_\pi\in H_k$ by:
$$f_\pi=\sum_{l\in L_k}\left(\prod_{i\sim_\pi j}\delta(l_i,l_j^o)\gamma(l_i)\right)l$$

Here $e\to e^o$ is the edge reversing, and the ``spin factor'' is $\gamma=\sqrt{\mu(t)/\mu(s)}$, where $s,t$ are the source and target of the edges. The point is that we have $G_{kn}(\pi,\sigma)=<f_\pi,f_\sigma>$. We refer to \cite{mar}, \cite{jo1}, \cite{gjs} for full details regarding this formula.

{\bf Step 2.} With a suitable choice of $(\Gamma,\mu)$, we obtain a fomula of type $G_{kn}=T_{kn}T_{kn}^t$.

Indeed, let us choose $\Gamma=\mathbb N$ to be the Cayley graph of $O_n^+$, and the eigenvector entries $\mu(r)$ to be the Chebycheff polynomials $P_r(n)$, i.e. the orthogonal polynomials for $O_n^+$.

In this case, we have a bijection $\mathcal P_{o^+}(k)\to L_k$, constructed as follows.  For $\pi \in \mathcal P_{o^+}(k)$ and $0\leq i\leq k$ we define $h_\pi(i)$ to be the number of $1\leq j \leq i$ which are joined by $\pi$ to a number strictly larger than $i$.  We then define a loop $l(\pi)=l(\pi)_1\ldots l(\pi)_k$, where $l(\pi)_i$ is the edge from $h_{\pi}(i-1)$ to $h_{\pi}(i)$. Consider now the following matrix:
$$T_{kn}(\pi,\sigma)=\prod_{i\sim_\pi j}\delta(l(\sigma)_i,l(\sigma)_j^o)\gamma(l(\sigma)_i)$$

We have $f_\pi=\sum_\sigma T_{kn}(\pi,\sigma)\cdot l(\sigma)$, so we obtain as desired $G_{kn}=T_{kn}T_{kn}^t$.

{\bf Step 3.} We show that, with suitable conventions, $T_{kn}$ is lower triangular.

Indeed, consider the partial order on $\mathcal P_{o^+}(k)$ given by $\pi\leq\sigma$ if $h_\pi(i)\leq h_\sigma(i)$ for $i=1,\ldots,k$.  Our claim is that $\sigma\not\leq\pi$ implies $T_{kn}(\pi,\sigma)=0$.

Indeed, suppose that $\sigma\not\leq\pi$, and let $j$ be the least number with $h_\sigma(j)>h_\pi(j)$. Note that we must have $h_\sigma(j-1)=h_\pi(j-1)$ and $h_\sigma(j)=h_\pi(j)+2$. It follows that we have $i\sim_\pi j$ for some $i<j$.  From the definitions of $T_{kn}$ and $l(\sigma)$, if $T_{kn}(\pi,\sigma)\neq 0$ then we must have $h_\sigma(i-1)=h_\sigma(j)=h_\pi(j)+2$. But we also have $h_\pi(i-1)=h_\pi(j)$, so that $h_\sigma(i-1)=h_\pi(i-1)+2$, which contradicts the minimality of $j$.

{\bf Step 4.} End of the proof, by computing the determinant of $T_{kn}$.

Since $T_{kn}$ is lower triangular we have:
$$\det(T_{kn}) 
=\prod_\pi T_{kn}(\pi,\pi)
=\prod_\pi\prod_{i\sim_\pi j}
\sqrt{\frac{P_{h_{\pi(i)}}}{P_{h_{\pi(i)-1}}}}
=\prod_{r=1}^{k/2}P_r^{e_{kr}/2}$$

Here the exponents appearing on the right are by definition as follows:
$$e_{kr}=\sum_\pi\sum_{i\sim_\pi j}\delta_{h_\pi(i),r}-\delta_{h_\pi(i),r+1}$$

Our claim now, which finishes the proof, is that for $1\leq r\leq k/2$ we have:
$$\sum_\pi\sum_{i\sim_\pi j}\delta_{h_\pi(i)r}=f_{k/2,r}$$

Indeed, note that the left term counts the number of times that the edge $(r,r+1)$ appears in all loops in $L_k$. Define a shift operator $S$ on the edges of $\Gamma$ by $S(s,t)=(s+1,t+1)$.  Given a loop $l=l_1\ldots l_k$ and $1\leq s\leq k$ with $l_s=(r,r+1)$, define a path $S^r(l_s)\ldots S^r(l_k)l_{s-1}^o\ldots l_1^o$. Observe that this is a path in $\Gamma$ from $2r$ to $0$ whose first edge is $(2r,2r+1)$ and first reaches $r-1$ after $k-s+1$ steps.

Conversely, given a path $f_1\ldots f_k$ in $\Gamma$ from $2r$ to $0$ whose first edge is $(2r,2r+1)$ and first reaches $r-1$ after $s$ steps, define a loop $f_k^o\ldots f_s^oS^{-r}(f_1)\ldots S^{-r}(f_{s-1})$.  Observe that this is a loop in $\Gamma$ based at 0 whose $k-s+1$ edge is $(r,r+1)$.

These two operations are inverse to each other, so we have established a bijection between $k$-loops in $\Gamma$ based at $0$ whose $s$-th edge is $(r,r+1)$ and $k$-paths in $\Gamma$ from $2r$ to $0$ whose first edge is $(2r,2r+1)$ and which first reach $r-1$ after $k-s+1$ steps.

It follows that the left hand side is equal to the number of paths in $\Gamma=\mathbb N$ from $2r$ to $0$ whose first edge is $(2r, 2r+1)$.  By the usual reflection trick, this is the difference of binomials defining $f_{k/2,r}$, and we are done.
\end{proof}

We use the notation $a_k=\sum_{\pi\in \mathcal P(k)}(2|\pi|-k)$, which already appeared in section 2.

\begin{theorem}
For $B_n^+$ we have:
$$\det(G_{kn})=n^{a_k}\prod_{r=1}^{[k/2]}P_r(n-1)^{\sum_{l=1}^{[k/2]}\binom{k}{2l}d_{lr}}$$
\end{theorem}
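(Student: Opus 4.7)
The plan is to mirror the proof of Theorem 4.2 (the classical $B_n$ case), only now plugging in Theorem 5.1 in place of Theorem 4.1 at the end. The starting point is the isomorphism $B_n^+\simeq O_{n-1}^+$ from \cite{bsp}, under which the fundamental representation decomposes as $u=v+1$, with $v$ the fundamental representation of $O_{n-1}^+$. Tensoring gives
$$u^{\otimes k}=(v+1)^{\otimes k}=\sum_{r=0}^k\binom{k}{r}v^{\otimes r},$$
where the multiplicity $\binom{k}{r}$ records the choice of the $r$ positions on which $v$ acts nontrivially (the remaining $k-r$ tensor slots carrying the trivial subrepresentation spanned by $\xi_0=\sum_i e_i$).

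Next I would pass to fixed vectors. For each subset $S\subset\{1,\dots,k\}$ the block
$\mathrm{Fix}(v^{\otimes|S|})\otimes\xi_0^{\otimes S^c}$
is spanned by the standard invariants associated with noncrossing pairings of $S$, and, since singletons can not cross any other block, these blocks together recover exactly the natural $B_n^+$ basis indexed by $\mathcal P_{b^+}(k)$. The Gram matrix of $B_n^+$ therefore becomes, after a diagonal rescaling absorbing the norm $\sqrt{n}$ of $\xi_0$ at every singleton position, a direct sum of the $O_{n-1}^+$ Gram matrices. Computing the determinant of this direct sum, and accounting for the rescaling, yields
$$\det(G_{kn})=n^{a_k}\prod_{l=0}^{[k/2]}\det(G^{O_{n-1}^+}_{2l,n-1})^{\binom{k}{2l}},$$
where the exponent of $n$ comes out to $\sum_{l}(k-2l)\binom{k}{2l}b^{O^+}_{2l}=\sum_{\pi\in\mathcal P_{b^+}(k)}(2|\pi|-k)=a_k$, since $2|\pi|-k$ is precisely the number of singletons of $\pi\in\mathcal P_{b^+}(k)$. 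Odd values of $r$ drop out because $\mathcal P_{o^+}(r)=\emptyset$, and the $l=0$ factor is trivial.

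Finally I would substitute Theorem 5.1, namely
$$\det(G^{O_{n-1}^+}_{2l,n-1})=\prod_{r=1}^{l}P_r(n-1)^{d_{l,r}},$$
and swap the order of the two products. Because $d_{l,r}=0$ for $l<r$, the inner sum over $l$ can be taken from $1$ to $[k/2]$, giving
$$\det(G_{kn})=n^{a_k}\prod_{r=1}^{[k/2]}P_r(n-1)^{\sum_{l=1}^{[k/2]}\binom{k}{2l}d_{l,r}},$$
which is the claimed formula.

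The main obstacle is Step 2: one must verify carefully that the decomposition of $\mathrm{Fix}(u^{\otimes k})$ produced by $u=v+1$ really is compatible with the Gram basis $\{T_\pi\}_{\pi\in\mathcal P_{b^+}(k)}$, so that the diagonal rescaling by $\sqrt{n}$ per singleton actually turns the $B_n^+$ Gram matrix into the block-diagonal sum of the $O_{n-1}^+$ Grams. This is exactly the combinatorial step glossed over in the proof of Theorem 4.2; once it is in place, everything else is bookkeeping.
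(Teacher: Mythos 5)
Your proposal is correct and follows essentially the same route as the paper: the isomorphism $B_n^+\simeq O_{n-1}^+$ with $u=v+1$, the binomial decomposition of $u^{\otimes k}$, the resulting factorization $\det(G_{kn})=n^{a_k}\prod_l\det(G^{O^+}_{2l,n-1})^{\binom{k}{2l}}$, and substitution of Theorem 5.1. In fact you supply more detail than the paper does on the step it glosses over (the identification of the fixed-vector blocks with $\mathcal P_{b^+}(k)$ and the origin of the $n^{a_k}$ factor from the singleton rescaling).
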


\begin{proof}
We have $B_n^+\simeq O_{n-1}^+$, see e.g. \cite{rau}, so we can use the same method as in the proof of Theorem 4.2. By using prime exponents for the various $O_n^+$-related objects, we get:
$$\det(G_{kn})
=n^{a_k}\prod_{l=1}^{[k/2]}{\rm det}'(G_{2l,n-1})^{\binom{k}{2l}}
=n^{a_k}\prod_{l=1}^{[k/2]}\left(\prod_{r=1}^lP_r(n-1)^{d_{lr}}\right)^{\binom{k}{2l}}$$

Together with Theorem 5.1, this gives the formula in the statement.
\end{proof}

\begin{theorem}
For $S_n^+$ we have: 
$$\det(G_{kn})=(\sqrt{n})^{a_k}\prod_{r=1}^kP_r(\sqrt{n})^{d_{kr}}$$
\end{theorem}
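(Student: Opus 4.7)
\emph{Proof plan.} The plan is to reduce to the free orthogonal case via the classical ``fattening'' bijection, and then invoke Theorem 5.1. Define $\pi\mapsto\widetilde\pi$ as the bijection $\mathcal P_s^+(k)\to\mathcal P_o^+(2k)$ obtained by doubling each point $i$ into $i^-,i^+$ (with linear order $1^-<1^+<\cdots<k^-<k^+$) and, for each block $B=\{i_1<\cdots<i_r\}$ of $\pi$, inserting the pairs $i_1^+\sim i_2^-,\,i_2^+\sim i_3^-,\ldots,i_r^+\sim i_1^-$. The result is a noncrossing pair partition of $[2k]$ with exactly $|\widetilde\pi|=k$ pairs.

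The key combinatorial step is to establish the identity
$$|\widetilde\pi\vee\widetilde\sigma|=2|\pi\vee\sigma|-|\pi|-|\sigma|+k$$
for all $\pi,\sigma\in\mathcal P_s^+(k)$, with joins taken in the full partition lattice $\mathcal P_s$. This is the main obstacle. One natural approach is to view $\pi\vee\sigma$ and $\widetilde\pi\vee\widetilde\sigma$ as sets of connected components of the planar graphs obtained by drawing the two partitions as chord diagrams on $k$ and $2k$ points, and to apply Euler's formula to the resulting planar picture, in which each block of $\pi\vee\sigma$ of size $s$ expands into a region bounded by $2s$ doubled points and a certain cycle of arcs. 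Alternatively one can argue by induction on the number of blocks of $\pi$, reducing to elementary moves (splitting or merging a block) and checking directly that both sides change by the same amount.

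Granted the identity, we rewrite the entries of the Gram matrix as
$$G_{kn}^{S^+}(\pi,\sigma)=n^{|\pi\vee\sigma|}=(\sqrt n)^{|\pi|-k/2}(\sqrt n)^{|\widetilde\pi\vee\widetilde\sigma|}(\sqrt n)^{|\sigma|-k/2}=(\sqrt n)^{|\pi|-k/2}\,G_{2k,\sqrt n}^{O^+}(\widetilde\pi,\widetilde\sigma)\,(\sqrt n)^{|\sigma|-k/2}.$$
Equivalently, after identifying the index sets $\mathcal P_s^+(k)$ and $\mathcal P_o^+(2k)$ via fattening, we have $G_{kn}^{S^+}=D\,G_{2k,\sqrt n}^{O^+}\,D$, where $D$ is the diagonal matrix with entries $D(\pi,\pi)=(\sqrt n)^{|\pi|-k/2}$.

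Taking determinants and applying Theorem 5.1 with $k\mapsto 2k$ and $n\mapsto\sqrt n$ gives
$$\det(G_{kn}^{S^+})=\det(D)^2\prod_{r=1}^k P_r(\sqrt n)^{d_{k,r}}=(\sqrt n)^{\sum_\pi(2|\pi|-k)}\prod_{r=1}^k P_r(\sqrt n)^{d_{k,r}}=(\sqrt n)^{a_k}\prod_{r=1}^k P_r(\sqrt n)^{d_{k,r}},$$
which is the formula in the statement.
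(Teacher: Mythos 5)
Your proposal is correct and follows essentially the same route as the paper: the fattening bijection $\mathcal P_s^+(k)\to\mathcal P_o^+(2k)$, the block-count identity, the factorization $G_{kn}^{S^+}=D\,G_{2k,\sqrt n}^{O^+}\,D$ with $D=\mathrm{diag}((\sqrt n)^{|\pi|-k/2})$, and Theorem 5.1. The identity you single out as the main obstacle is exactly the cabling relation of Kodiyalam--Sunder and Chen--Przytycki, which the paper simply cites rather than proves, so no further work is needed there.
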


\begin{proof}
Let $\pi\to\widetilde\pi$ be the ``cabling'' operation, obtained by collapsing neighbors. According to the results of Kodiyalam-Sunder \cite{ksu} and Chen-Przytycki \cite{cpr}, we have:
$$|\pi\vee\sigma|=k/2+2|\widetilde\pi\vee\widetilde\sigma|-|\widetilde\pi|-|\widetilde\sigma|$$

In terms of Gram matrices we get $G_{kn}=D_{kn}G'_{2k,\sqrt{n}}D_{kn}$, where $D_{kn}=diag(n^{|\widetilde\pi|/2-k/4})$, and where $G'$ is the Gram matrix for $O_n^+$, so the result follows from Theorem 5.1.
\end{proof}

\begin{theorem}
For $H_n^+$ we have the formula
$$\det(G_{kn})=(\sqrt{n})^{a_k}\prod_{r=1}^{[k/2]}P_r(\sqrt{n})^{2d_{k/2,r}'}$$
with $d_{sr}'=f_{sr}'-f_{s,r+1}'$, where 
$f_{sr}'=\binom{3s}{s-r}-\binom{3s}{s-r-1}$ for $s\in\mathbb Z$, $f_{sr}'=0$ for $s\notin\mathbb Z$.
\end{theorem}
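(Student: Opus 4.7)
The plan is to follow the four-step loop-model strategy used to prove Theorem 5.1, with $O_n^+$ replaced throughout by $H_n^+$. The two new features to explain are the evaluation of the Chebyshev polynomials at $\sqrt{n}$ (with doubled multiplicity) and the replacement of the binomials $\binom{2s}{s-r}$ by the trinomials $\binom{3s}{s-r}$; both should ultimately reflect that blocks in $\mathcal P_{h^+}$ have arbitrary even size rather than size exactly two.

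First I would set up the general loop-model framework of \cite{mar}, \cite{jo1}, \cite{gjs}. The correct choice of $(\Gamma,\mu)$ is the Cayley graph of $H_n^+$ together with the orthogonal polynomials of the free Bessel law; since those polynomials are expressible in terms of $P_r(\sqrt{n})^2$ up to normalization, this should explain both the argument $\sqrt{n}$ and the factor $2$ appearing in the final exponent. A bijection between $\mathcal P_{h^+}(k)$ and length-$k$ loops at $0$ in $\Gamma$, defined via a height function $h_\pi(i)$ counting the legs of $\pi$ still open after position $i$, then yields a factorization $G_{kn}=T_{kn}T_{kn}^t$ exactly as in Step~2 of Theorem 5.1. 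Endowing $\mathcal P_{h^+}(k)$ with the partial order $\pi\leq\sigma$ iff $h_\pi\leq h_\sigma$ pointwise, one repeats the minimality argument of Step~3 to conclude that $T_{kn}$ is lower triangular, after which the diagonal product reads $\prod_r P_r(\sqrt{n})^{e'_{kr}}$ for some explicit integer exponents $e'_{kr}$.

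The main obstacle will be Step~4: identifying $e'_{kr}$ with $2d'_{k/2,r}$. The underlying enumerative quantity is the total number of times the edge $(r,r+1)$ is traversed across all length-$k$ loops in $\Gamma$ based at $0$. The shift-and-reverse bijection of Theorem 5.1 converts this into a count of paths in $\Gamma$ from $2r$ to $0$ starting with the edge $(2r,2r+1)$. Because the Cayley graph of $H_n^+$ has higher branching than $\mathbb{N}$, the analogous reflection argument should produce the trinomial difference $\binom{3s}{s-r}-\binom{3s}{s-r-1}=f'_{sr}$ in place of the binomial $f_{sr}$. The prefactor $(\sqrt{n})^{a_k}$ then absorbs the contribution of the level $r=0$, which is excluded from the Chebyshev product. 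Making the ternary reflection count rigorous, and checking that the parity constraints imposed by even-sized blocks line up exactly with the support of $f'_{sr}$, is the delicate point where the proof will require the most care.
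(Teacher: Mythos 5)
Your plan diverges from the paper's proof, which does not redo the four-step loop-model argument at all: it uses the cabling operation $\pi\to\widetilde\pi$ of \cite{bbc}, the Kodiyalam--Sunder/Chen--Przytycki identity $|\pi\vee\sigma|=k/2+2|\widetilde\pi\vee\widetilde\sigma|-|\widetilde\pi|-|\widetilde\sigma|$ to write $G_{kn}=D_{kn}G'_{2k,\sqrt{n}}D_{kn}$ with $D_{kn}$ diagonal and $G'$ the Gram matrix of the Fuss--Catalan algebra, and then quotes Di Francesco's computation \cite{df2} of the Fuss--Catalan meander determinant. In particular the prefactor $(\sqrt{n})^{a_k}$ comes from $\det(D_{kn})^2$, not from ``absorbing a level $r=0$ contribution.'' A direct loop-model proof in the spirit of Theorem 5.1 is not out of the question, but it would essentially amount to reproving Di Francesco's result, and your sketch leaves the genuinely hard part (the ternary reflection count producing $\binom{3s}{s-r}-\binom{3s}{s-r-1}$, and the lower-triangularity of $T_{kn}$ on the relevant graph) entirely asserted.

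More seriously, your proposed choice of $(\Gamma,\mu)$ is wrong on a concrete point. You take $\mu$ to be the orthogonal polynomials of the free Bessel law and claim these are ``expressible in terms of $P_r(\sqrt{n})^2$ up to normalization''; they are not. Section 7 of the paper computes the Jacobi parameters of the free Bessel law and finds $\beta_k=\tfrac{3(3k\mp1)(3k\pm2)}{4(2k-1)(2k+1)}$, which is precisely why the authors state that the orthogonal-polynomial interpretation of the factors in the $H_n^+$ determinant \emph{fails} (end of Section 8): the polynomials $S_r(n)=n^{-l}P_r(\sqrt n)^2$-type objects of Proposition 6.2 are ad hoc and are not orthogonal for the spectral measure of $H_n^+$. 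Consequently the eigenvector you propose does not satisfy the eigenvalue equation needed for the identity $G_{kn}(\pi,\sigma)=\langle f_\pi,f_\sigma\rangle$ to reproduce $n^{|\pi\vee\sigma|}$, and Step 2 of your argument breaks down. The graph that actually works is the principal graph of the Fuss--Catalan planar algebra at parameters $(\sqrt{n},\sqrt{n})$, reached only after cabling; if you want a self-contained proof, you must either insert the cabling reduction first or set up the loop model directly on that bicolored graph, where a single height function $h_\pi(i)$ no longer suffices to encode a loop.
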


\begin{proof}
According to \cite{bbc}, the diagrams for $H_n^+$ are the ``cablings'' of the Fuss-Catalan diagrams \cite{bjo}, so we can use the same method as in the previous proof. So, by using the above formula from \cite{ksu}, \cite{cpr}, we have $G_{kn}=D_{kn}G'_{2k,\sqrt{n}}D_{kn}$, where $D_{kn}=diag(n^{|\widetilde\pi|/2-k/4})$, and where $G'$ is the Gram determinant for the Fuss-Catalan algebra. But this latter determinant was computed by Di Francesco in \cite{df2}, and this gives the result.
\end{proof}

\section{Algebraic manipulations}

In this section we perform some algebraic manipulations on the formulae found in the previous sections. Consider the quantity $a_k=\sum_{\pi\in \mathcal P(k)}(2|\pi|-k)$, which already appeared, several times. Then $n^{a_k}$ is a true ``contribution'', in the sense of Conjecture 3.3. 

We will prove here that a $n^{a_k}$ factor appears naturally, in all the 10 formulae of Gram determinants. This can be regarded as a piece of evidence towards Conjecture 3.3.

In the classical and half-liberated cases there is no need for supplementary work in order to isolate this $n^{a_k}$ factor, and the unified result is as follows.

\begin{theorem}
In the classical and half-liberated cases, we have
\begin{eqnarray*}
S_n,H_n,H_n^*:\quad\det(G_{kn})&=&n^{a_k}\prod_{\pi\in \mathcal P(k)}\frac{n^{k-2|\pi|}n!}{(n-|\pi|)!}\\
O_n:\quad\det(G_{kn})&=&n^{a_k}\prod_{|\lambda|=k/2}f_n(\lambda)^{f^{2\lambda}}\\
B_n:\quad\det(G_{kn})&=&n^{a_k}\prod_{|\lambda|\leq k/2}f_n'(\lambda)^{\binom{k}{2|\lambda|}f^{2\lambda}}\\
O_n^*:\quad\det(G_{kn})&=&n^{a_k}\prod_{|\lambda|=k/2}f_n''(\lambda)^{{f^\lambda}^2}
\end{eqnarray*}
where $f_n^\circ(\lambda)=\prod_{(i,j)\in\lambda}(n+j-i+\varphi^\circ)$, with $\varphi=j-1,\varphi'=j-2,\varphi''=0$.
\end{theorem}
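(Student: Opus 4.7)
The plan is to observe that this theorem is essentially a reformulation of the results already established in Theorems 3.1, 4.1, 4.2, 4.3, together with simple algebra and the values of $a_k$ from Proposition 2.2. So the strategy is to treat the six cases in three groups and in each group verify that the claimed identity matches the prior formula on the nose.

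First I would handle $O_n$ and $O_n^*$. For these, Proposition 2.2 gives $a_{2l}=0$, so $n^{a_k}=1$ and it only remains to check that the $f_n$ defined via $\varphi = j-1$ in the statement agrees with the $f_n$ of Theorem 4.1, and that the $f_n''$ defined via $\varphi''=0$ agrees with the $f_n$ of Theorem 4.3. Both checks are immediate: $n+j-i+(j-1)=n+2j-i-1$ and $n+j-i+0=n+j-i$. Thus the $O_n$ and $O_n^*$ lines of the statement are literally Theorem 4.1 and Theorem 4.3 rewritten in the $\varphi$-notation.

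For $B_n$, the prefactor $n^{a_k}$ is already present in Theorem 4.2, so I just need to note that $f_n'$ with $\varphi'=j-2$ matches the $f_n$ of Theorem 4.2, since $n+j-i+(j-2)=n+2j-i-2$. Nothing else to verify.

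The only case requiring any work is $S_n, H_n, H_n^*$. Here the existing formula from Theorem 3.1 is $\det(G_{kn})=\prod_\pi n!/(n-|\pi|)!$, while the statement rewrites this as $n^{a_k}\prod_\pi n^{k-2|\pi|}\cdot n!/(n-|\pi|)!$. These two expressions agree iff
\[
a_k+\sum_{\pi\in\mathcal P(k)}(k-2|\pi|)=0,
\]
which is immediate from the very definition $a_k=\sum_\pi(2|\pi|-k)$ given in Section~2. The main (and only) ``obstacle'' is really just bookkeeping: one inserts the factor $n^{a_k}\cdot n^{-a_k}=1$ into the Jackson--Lindstr\"om formula and redistributes the negative powers of $n$ over the product indexed by $\pi$. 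With that, all four identities in the statement are verified, and the unified role of the contribution $n^{a_k}$ in the classical and half-liberated cases is exhibited as advertised.
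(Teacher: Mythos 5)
Your proposal is correct and matches the paper's own (one-line) proof, which likewise treats the theorem as a direct reformulation of Theorem 3.1 and Theorems 4.1--4.3, using $a_k=0$ for $O_n,O_n^*$ and the definition $a_k=\sum_{\pi\in\mathcal P(k)}(2|\pi|-k)$ to absorb the inserted powers of $n$ in the $S_n,H_n,H_n^*$ case. Your write-up simply makes the bookkeeping explicit.
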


\begin{proof}
This is a reformulation of the results in section 4, by using $a_k=0$ for $O_n,O_n^*$. 
\end{proof}

In order to process the formulae in section 5, we need the following technical result. 

\begin{proposition}
The Chebycheff polynomials $P_r$ have the following properties:
\begin{enumerate}
\item $P_r(n-1)=Q_r(n)$, with $Q_0=1,Q_1=n-1$ and $Q_{r+1}=(n-1)Q_r-Q_{r-1}$.

\item $P_{2l}(n)=R_{2l}(n^2)$, with $R_0=1,R_2=n-1$ and $R_{2l+2}=(n-2)R_{2l}-R_{2l-2}$.

\item $P_{2l+1}(n)=nR_{2l+1}(n^2)$, with $R_1=1,R_3=n-2$ and $R_{2l+3}=(n-2)R_{2l+1}-R_{2l-1}$.

\item $P_{2l}(n)=n^{-l}S_{2l}(n^2)^{1/2}$, with $S_0=1,S_2=n(n-1)^2$ and so on.

\item $P_{2l+1}(n)=n^{-l}S_{2l+1}(n^2)^{1/2}$, with $S_1=n,S_3=n^2(n-2)^2$ and so on. 
\end{enumerate}
\end{proposition}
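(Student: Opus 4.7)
The entire proposition rests on one derived identity: the ``second-order'' recurrence $P_{r+2} = (n^2-2)P_r - P_{r-2}$, valid for $r \geq 2$. My plan is to establish this once and then read off parts (2)--(5) from it, with part (1) handled separately by a trivial substitution.

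For (1), I would simply observe that the sequence $Q_r(n) := P_r(n-1)$ satisfies $Q_{r+1} = (n-1)Q_r - Q_{r-1}$ with $Q_0 = 1$ and $Q_1 = n-1$, by replacing the formal variable $n$ with $n-1$ throughout the defining recurrence for $P_r$. This takes one line.

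The heart of the argument is the derivation of the second-order recurrence. Applying $P_{r+1} = nP_r - P_{r-1}$ twice gives
$$P_{r+2} = n(nP_r - P_{r-1}) - P_r = (n^2-1)P_r - nP_{r-1},$$
and then using $nP_{r-1} = P_r + P_{r-2}$ (a rearrangement of $P_r = nP_{r-1} - P_{r-2}$) yields $P_{r+2} = (n^2-2)P_r - P_{r-2}$. The crucial feature is that only $n^2$, and not $n$, occurs on the right. Specialising to $r = 2l$ yields exactly the recurrence asserted for $R_{2l}$ in (2); combined with the base cases $P_0 = 1$ and $P_2 = n^2-1$, matching $R_0(n^2) = 1$ and $R_2(n^2) = n^2-1$, induction on $l$ establishes (2). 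For (3), the same derived recurrence at $r = 2l+1$, together with $P_1 = n$ and $P_3 = n(n^2-2)$, gives the claim by induction once one factors out the common $n$ and writes the remaining recurrence in the variable $x = n^2$.

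For (4) and (5), nothing essential remains beyond squaring. Setting $S_{2l}(x) := x^l R_{2l}(x)^2$ and $S_{2l+1}(x) := x^{l+1} R_{2l+1}(x)^2$, parts (2) and (3) give immediately $n^{-2l} S_{2l}(n^2) = R_{2l}(n^2)^2 = P_{2l}(n)^2$, and analogously for the odd case; taking positive square roots (legitimate since both sides are polynomials agreeing for large $n$) produces the asserted identities, and one then checks that $S_0 = 1$, $S_2(x) = x(x-1)^2$, $S_1(x) = x$ and $S_3(x) = x^2(x-2)^2$ match the data given. There is no genuine obstacle anywhere; the only step requiring thought is the derivation of the second-order recurrence. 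The one subtlety worth flagging is notational: the polynomials $R_r, S_r$ are written by the author with ``$n$'' as their polynomial variable, but they are evaluated at $n^2$ in the proposition, so one should read e.g. ``$S_2 = n(n-1)^2$'' as defining $S_2(x) = x(x-1)^2$ with $S_2(n^2) = n^2(n^2-1)^2$.
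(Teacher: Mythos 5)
Your proposal is correct, and it is in fact more of a proof than the paper provides: the paper's own ``proof'' of this proposition consists of the single remark that the verification is routine, together with the observation (relevant for Section 7, not for the verification itself) that $Q_r$ and $R_{2l}$ are the orthogonal polynomials for $B_n^+$ and $S_n^+$, while the $S_r$ are ad hoc technical objects for the $H_n^+$ computation. Your route --- deriving the parity-preserving recurrence $P_{r+2}=(n^2-2)P_r-P_{r-2}$ from two applications of $P_{r+1}=nP_r-P_{r-1}$, then reading off (2) and (3) by induction on the even and odd subsequences, and obtaining (4) and (5) by squaring --- is exactly the computation the authors are waving at, carried out in full. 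The base cases you check ($P_2=n^2-1$, $P_3=n(n^2-2)$, and the translations $S_2(x)=x(x-1)^2$, $S_3(x)=x^2(x-2)^2$) all match the data in the statement, and your closing remark about the variable convention (the $R_r,S_r$ are polynomials in a variable that gets evaluated at $n^2$) is a genuine notational trap in the paper that you were right to flag. The only thing your write-up does not do, and is not obliged to do, is connect $Q_r$ and $R_{2l}$ to the orthogonal polynomials of the relevant measures; that interpretation is used later in the paper (Proposition 7.2 and Theorem 7.3) but plays no role in proving the five identities here.
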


\begin{proof}
This is routine. As pointed out in section 7 below, $Q_r$ are the orthogonal polynomials for $B_n^+$, and $R_{2l}$ are the orthogonal polynomials for $S_n^+$. As for the polynomials $S_r$, these are some technical objects, introduced in relation with the $H_n^+$ computation.
\end{proof}

\begin{theorem}
In the free cases, we have
\begin{eqnarray*}
O_n^+:\quad\det(G_{kn})&=&n^{a_k}\prod_{r=1}^{[k/2]}P_r(n)^{d_{k/2,r}^1}\\
B_n^+:\quad\det(G_{kn})&=&n^{a_k}\prod_{r=1}^{[k/2]}Q_r(n)^{\sum_{l=1}^{[k/2]}\binom{k}{2l}d_{lr}^1}\\
S_n^+:\quad\det(G_{kn})&=&n^{a_k}\prod_{r=1}^kR_r(n)^{d_{kr}^1}\\
H_n^+:\quad\det(G_{kn})&=&n^{a_k}\prod_{r=1}^{[k/2]}S_r(n)^{d_{k/2,r}^2}
\end{eqnarray*}
where $d_{kr}^i=f_{kr}^i-f_{k,r+1}^i$, with $f_{kr}^i=\binom{(i+1)k}{k-r}-\binom{(i+1)k}{k-r-1}$ for $k\in\mathbb Z$, and $f_{kr}^i=0$ for $k\notin\mathbb Z$.
\end{theorem}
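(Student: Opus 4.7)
The plan is to derive each of the four identities in Theorem 6.3 from the corresponding formula in Section 5, using the polynomial identities of Proposition 6.2 to extract an $n^{a_k}$ prefactor. The cases of $O_n^+$ and $B_n^+$ are essentially bookkeeping; the cases of $S_n^+$ and $H_n^+$ require, in addition, a combinatorial identity to absorb the $\sqrt{n}$ factors produced when $P_r(\sqrt{n})$ is rewritten in terms of $R_r$ or $S_r$.

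For $O_n^+$, every $\pi\in\mathcal P_{o^+}(k)$ is a pairing, so $|\pi|=k/2$ and hence $a_k=0$; since $d^1_{k/2,r}$ agrees with $d_{k/2,r}$ by direct comparison of the definitions in the case $i=1$, Theorem 5.1 already gives the claimed formula. For $B_n^+$, the substitution $P_r(n-1)=Q_r(n)$ from Proposition 6.2(1), combined with $d^1_{lr}=d_{lr}$, converts Theorem 5.2 directly into the claimed form, the factor $n^{a_k}$ being already present there.

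For $S_n^+$, I would start from Theorem 5.3 and split the product $\prod_{r=1}^k P_r(\sqrt{n})^{d_{kr}}$ by the parity of $r$. Proposition 6.2(2) gives $P_{2l}(\sqrt{n})=R_{2l}(n)$ and Proposition 6.2(3) gives $P_{2l+1}(\sqrt{n})=\sqrt{n}\,R_{2l+1}(n)$. The odd-index substitutions generate an extra prefactor $(\sqrt{n})^{\Sigma_k}$ with $\Sigma_k=\sum_{l\geq 0}d_{k,2l+1}$, which combines with the $(\sqrt{n})^{a_k}$ of Theorem 5.3 to produce $n^{a_k}$ precisely when $\Sigma_k=a_k$. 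Since $a_k=b_k=\tfrac{1}{k+1}\binom{2k}{k}$ for $S_n^+$ by Proposition 2.2(3), the required identity is
\[
\sum_{l\geq 0}\bigl(f_{k,2l+1}-f_{k,2l+2}\bigr)=\frac{1}{k+1}\binom{2k}{k},
\]
which I would verify either by direct manipulation of binomial sums or by a reflection-principle interpretation on Dyck paths. The case $H_n^+$ is analogous: squaring Proposition 6.2(4)--(5) yields $P_r(\sqrt{n})^2=n^{-\lfloor r/2\rfloor}S_r(n)$ for all $r\geq 1$, so substitution into the product of Theorem 5.4 produces an extra factor $n^{-\eta_k}$ with $\eta_k=\sum_{r=1}^{[k/2]}\lfloor r/2\rfloor\,d'_{k/2,r}$. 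The prefactor condition $\tfrac{a_k}{2}-\eta_k=a_k$ becomes, via $a_{2l}=-2\binom{3l-1}{l-2}$, the Fuss-Catalan identity $\eta_{2l}=\binom{3l-1}{l-2}$; I would prove it by a reflection argument on paths with step set associated to the numbers $\binom{3l}{l-r}-\binom{3l}{l-r-1}$, in analogy with Step 4 of the proof of Theorem 5.1.

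The main obstacle is precisely these two combinatorial identities. They are not consequences of the polynomial substitutions alone but must be verified from the explicit expressions for $d^1$, $d^2$ and $a_k$. The $S_n^+$ identity is a clean ballot-number statement and should succumb to the reflection principle on nonnegative lattice paths. The $H_n^+$ identity is more delicate, because the weights $\lfloor r/2\rfloor$ interact nontrivially with the Fuss-Catalan differences, and this is where I would expect to spend the most care; the cleanest route is likely a direct path-counting interpretation along the lines of the meander arguments already used in Section 5.
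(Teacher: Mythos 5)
Your proposal is correct and follows essentially the same route as the paper: reduce each case to the corresponding formula of Section 5 via the substitutions of Proposition 6.2, and absorb the residual powers of $\sqrt{n}$ for $S_n^+$ and $H_n^+$ by exactly the two binomial identities $\sum_{l}d^1_{k,2l-1}=\frac{1}{k+1}\binom{2k}{k}$ and $\sum_{s}[s/2]\,d^2_{ls}=\binom{3l-1}{l-2}$ that the paper also invokes (and likewise leaves as a ``direct computation''). Your identification of these identities as the only nontrivial content, and your proposed verification by ballot-number/reflection arguments, is consistent with the paper's treatment.
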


\begin{proof}
The $O_n^+$ formula is the one in Theorem 5.1, with a $n^{a_k}=1$ factor inserted.

The $B_n^+$ formula is the one in Theorem 5.2, with $P_r(n-1)$ replaced by $Q_r(n)$.

For the $S_n^+$ formula, we use Theorem 5.3. By replacing the Chebycheff polynomials $P_{2l},P_{2l+1}$ by the polynomials $R_{2l},R_{2l+1}$ from Proposition 6.2, we get:
$$\det(G_{kn})
=(\sqrt{n})^{a_k}\prod_{r=1}^kP_r(\sqrt{n})^{d_{kr}^1}
=(\sqrt{n})^{a_k}\sqrt{n}^{\sum_{l=1}^{[(k+1)/2]}d_{k,2l-1}^1}\prod_{r=1}^kR_r(n)^{d_{kr}}$$

Now recall from Proposition 2.2 that $a_k=\frac{1}{k+1}\binom{2k}{k}$. On the other hand a direct computation gives $\sum_{l=1}^{[(k+1)/2]}d_{k,2l-1}^1=\frac{1}{k+1}\binom{2k}{k}$, so we get the formula in the statement.

For the $H_n^+$ formula we use a similar method. With $k=2l$, Theorem 5.4 gives:
$$\det(G_{2l,n})=(\sqrt{n})^{a_{2l}}\prod_{r=1}^{l}P_r(\sqrt{n})^{2d_{lr}^2}
=(\sqrt{n})^{a_{2l}}(\sqrt{n})^{-2\sum_{s=2}^l[s/2]d_{ls}^2}\prod_{r=1}^{l}S_r(n)^{d_{lr}^2}$$

Now recall from Proposition 2.2 that $a_{2l}=-2\binom{3l-1}{l-2}$. On the other hand a direct computation gives $\sum_{s=2}^l[s/2]d_{ls}^2=\binom{3l-1}{l-2}$, so we get the formula in the statement.
\end{proof}

As a conclusion, the formulae in Theorem 6.1 and Theorem 6.3 are an intermediate step towards a general decomposition result of type $\det(G_{kn})=\prod_{\pi\in \mathcal P(k)}\varphi(\pi)$. We will come back to the question of finding such a general decomposition result in section 8 below.

\section{Orthogonal polynomials}

We present here a speculation in the free case, in relation with orthogonal polynomials. As we will see, this speculation works for $S_n^+,O_n^+,B_n^+$, but doesn't work for $H_n^+$.

\begin{definition}
The orthogonal polynomials for a real probability measure $\mu$ are the polynomials $Q_0,Q_1,Q_2,\ldots$ satisfying the following conditions:
\begin{enumerate}
\item $Q_k(n)=n^k+a_1n^{k-1}+\ldots+a_{k-1}n+a_k$, with $a_i\in\mathbb R$.

\item For any $k\neq l$ we have $\int Q_k(n)Q_l(n)\,d\mu(n)=0$.
\end{enumerate}
\end{definition}

The orthogonal polynomials can be constructed by using a recursive formula, of type $Q_{k+1}=(n-\alpha_k)Q_k-\beta_kQ_{k-1}$. Here the parameters $\alpha_k,\beta_k\in\mathbb R$ are uniquely determined by the linear equations coming from the fact that $Q_{k+1}$ must be orthogonal to $n^{k-1},n^k$. 

More precisely, by solving these two equations we obtain the following formulae, where the integral sign denotes the integration with respect to $\mu$:
$$\alpha_k=\frac{\int n^{k+1}Q_k}{\int n^kQ_k}-\frac{\int n^kQ_{k-1}}{\int n^{k-1}Q_{k-1}},\quad\beta_k=\frac{\int n^kQ_k}{\int n^{k-1}Q_{k-1}}$$

The numbers $\alpha_k,\beta_k$ are called Jacobi parameters of the sequence $\{Q_k\}$. Since $Q_0=1$, in order to describe $\{Q_k\}$ we just need to specify $Q_1$, and the Jacobi parameters.

The orthogonal polynomials for an easy quantum group are by definition those for the asymptotic measure of the main character, given in Theorem 2.6.

\begin{proposition}
The basic orthogonal polynomials are as follows:
\begin{enumerate}
\item $O_n$: here $Q_1=n$ and $Q_{k+1}=nQ_k-kQ_{k-1}$.

\item $B_n$: here $Q_1=n-1$ and $Q_{k+1}=(n-1)P_k-kQ_{k-1}$.

\item $O_n^*$: here $Q_1=n$ and $Q_{k+1}=nQ_k-[(k+1)/2]Q_{k-1}$.

\item $S_n$: here $Q_1=n-1$ and $Q_{k+1}=(n-k-1)Q_k-kQ_{k-1}$.

\item $O_n^+$: here $Q_1=n$ and $Q_{k+1}=nQ_k-Q_{k-1}$.

\item $B_n^+$: here $Q_1=n-1$ and $Q_{k+1}=(n-1)Q_k-Q_{k-1}$.

\item $S_n^+$: here $Q_1=n-1$ and $Q_{k+1}=(n-2)Q_k-Q_{k-1}$.
\end{enumerate}
\end{proposition}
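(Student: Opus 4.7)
The plan is to verify each of the seven recurrences by determining the Jacobi parameters $\alpha_k,\beta_k$ of the asymptotic main-character measure $\mu$ from Theorem 2.6. Once these are in hand, the three-term recurrence $Q_{k+1}=(n-\alpha_k)Q_k-\beta_k Q_{k-1}$ together with $Q_0=1$ is forced, and it matches the displayed formulae directly.

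For the four classical and half-liberated cases I would argue as follows. The $O_n$ row is the standard Gaussian, which produces the probabilist's Hermite polynomials with $\alpha_k=0$ and $\beta_k=k$. The $B_n$ row is the Gaussian shifted to mean $1$ (corresponding to the identification $B_n\simeq O_{n-1}$ used in \cite{bsp}), which changes the Jacobi parameters to $\alpha_k=1$ and $\beta_k=k$. The $S_n$ row is the Poisson$(1)$ law, whose monic orthogonal polynomials are the centered Charlier polynomials, with $\alpha_k=k+1$ and $\beta_k=k$. For the half-liberated $O_n^*$, I would work with the symmetrized Rayleigh density $|x|e^{-x^2}$ (total mass one, even moments $l!$), use the symmetry $x\to -x$ to force $\alpha_k=0$, and then read off $\beta_k=[(k+1)/2]$ via a short induction from $\beta_k=\int Q_k^2\,d\mu/\int Q_{k-1}^2\,d\mu$.

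For the three free cases I would use the standard principle that a real compactly supported measure has its Jacobi parameters encoded in the Stieltjes continued fraction of its Cauchy transform. The semicircular law on $[-2,2]$ has $\alpha_k=0,\beta_k=1$, producing the Chebyshev recurrence for $O_n^+$. Shifting the semicircle by $1$ gives $\alpha_k=1,\beta_k=1$, which is exactly the $B_n^+$ recurrence. The free Poisson (Marchenko--Pastur) law of parameter $1$ on $[0,4]$ has the well-known Jacobi parameters $\alpha_0=1$, $\alpha_k=2$ for $k\geq 1$, and $\beta_k=1$, yielding exactly the stated $S_n^+$ recurrence.

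The main obstacle is the $O_n^*$ case, because the symmetrized Rayleigh is less familiar and the floor-function $[(k+1)/2]$ requires separating parities of $k$. A clean route is the symmetry decomposition: the push-forward of $\mu$ under $x\mapsto x^2$ is the standard exponential measure $e^{-y}\,dy$ on $[0,\infty)$, whose monic orthogonal polynomials are the Laguerre polynomials, and similarly the measure $ye^{-y}\,dy$ governs the odd-indexed sub-sequence. Writing $Q_{2l}(n)=\widetilde Q_l(n^2)$ and $Q_{2l+1}(n)=n\widetilde R_l(n^2)$ reduces the problem to two classical Laguerre-type sequences, whose Jacobi parameters interlace to give precisely the floor pattern. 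A secondary, purely bookkeeping, point is to check that the normalizations used in Theorem 2.6 (unit variance for the Gaussians, mean $1$ for the $B$-type measures, etc.) match those used here; this is implicit in the cumulant computations of \cite{bsp}, \cite{ez1}, \cite{ez2}.
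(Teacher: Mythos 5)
Your proposal is correct and follows essentially the same route as the paper, which simply observes that the polynomials are the classical Hermite, Charlier and Chebyshev families (and shifts/variants thereof) attached to the measures of Theorem 2.6; your computation of the Jacobi parameters case by case, including the symmetry/pushforward argument handling the floor pattern $[(k+1)/2]$ for $O_n^*$, fills in exactly the details the paper leaves as ``well-known and easy to deduce from definitions.'' Note in passing that the $P_k$ in item (2) of the statement is a typo for $Q_k$, which your reading silently and correctly repairs.
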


\begin{proof}
This result is well-known, and easy to deduce from definitions. Note that all the polynomials in the above statement are versions of the polynomials appearing in (1,3,4,5), which are respectively the Hermite, Charlier  and Chebycheff polynomials.
\end{proof}

Let us go back now to the considerations in section 6. The polynomials $R_{2l}$ appearing in Proposition 6.2 are the orthogonal polynomials for $S_n^+$, and it is natural to call $\{R_n|n\in\mathbb N\}$ the family of ``extended orthogonal polynomials'' for $S_n^+$.

\begin{theorem}
In the $O_n^+,B_n^+,S_n^+$ cases we have a formula of type
$$\det(G_{kn})=n^{a_k}\prod_{r=1}^kQ_r(n)^{d_{kr}}$$
with $d_{kr}\in\mathbb N$, where $Q_r(n)$ are the corresponding extended orthogonal polynomials.
\end{theorem}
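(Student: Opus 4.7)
The plan is to read Theorem 7.3 as a dictionary-lookup / repackaging of Theorem 6.3: the four formulas there already express each free Gram determinant as $n^{a_k}$ times a product of powers of a specific polynomial family, and what remains is to check that in three of those four cases (we drop $H_n^+$, which is the obstruction the paper already flags) this family coincides with, or is a natural extension of, the orthogonal polynomials of the corresponding easy quantum group recorded in Proposition 7.2. All three cases then follow the same schematic: compare recursions.

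For $O_n^+$ the identification is immediate. The Chebycheff polynomials $P_r$ used in Theorem 5.1 satisfy $P_0=1$, $P_1=n$, $P_{r+1}=nP_r-P_{r-1}$, which is exactly the recursion in Proposition 7.2(5); so Theorem 6.3 already provides the formula with $Q_r=P_r$ and $d_{kr}=d^1_{k/2,r}$. The $B_n^+$ case is equally direct: Proposition 6.2(1) tells us that the polynomials appearing in Theorem 6.3 satisfy $Q_0=1$, $Q_1=n-1$, $Q_{r+1}=(n-1)Q_r-Q_{r-1}$, matching Proposition 7.2(6), and one reads off $d_{kr}=\sum_{l\leq[k/2]}\binom{k}{2l}d^1_{lr}$.

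The $S_n^+$ case is where the word ``extended'' is doing real work, and this would be the main point to explain carefully. The polynomials $R_r$ of Proposition 6.2 split by parity into two interleaved subsequences: the even-indexed ones satisfy $R_0=1$, $R_2=n-1$, $R_{2l+2}=(n-2)R_{2l}-R_{2l-2}$, so that $Q_l:=R_{2l}$ obeys the $S_n^+$ recursion $Q_0=1$, $Q_1=n-1$, $Q_{l+1}=(n-2)Q_l-Q_{l-1}$ of Proposition 7.2(7). The odd-indexed $R_{2l+1}$ carry no orthogonality interpretation and appear as purely auxiliary objects that are needed only to make the product in Theorem 6.3 run over all $r$ from $1$ to $k$; my proposal is simply to adopt the union $\{R_r\mid r\in\mathbb{N}\}$ as the definition of the extended orthogonal polynomials. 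With this convention the $S_n^+$ line of Theorem 6.3 is already in the desired shape, with $d_{kr}=d^1_{kr}$.

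No step in this plan presents a genuine obstacle: the whole argument is a recursion-matching exercise. The one point that calls for mild care is the re-indexing in the $S_n^+$ case and the consequent need to enlarge the orthogonal-polynomial family by the auxiliary odd-indexed $R_{2l+1}$. That the resulting exponents $d_{kr}$ are genuinely non-negative integers rather than just signed differences $f^i_{kr}-f^i_{k,r+1}$ is inherited from the reflection-principle / Temperley-Lieb provenance of these numbers in the Di Francesco-Golinelli-Guitter work already invoked in Section 5, and so needs no fresh argument here.
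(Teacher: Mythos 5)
Your proposal is correct and follows essentially the same route as the paper, whose proof of this statement is literally the one-line observation that it follows from Theorem 6.3 combined with Proposition 7.2; your recursion-matching for $O_n^+$, $B_n^+$ and your adoption of the full family $\{R_r\}$ as the ``extended orthogonal polynomials'' for $S_n^+$ is exactly the identification the paper makes in Proposition 6.2 and the paragraph preceding Theorem 7.3. The only caveat is that the non-negativity of the exponents $d_{kr}$ is, as in the paper, deferred to the combinatorial provenance of the numbers $f_{kr}$ rather than argued afresh, which is consistent with the paper's own level of detail.
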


\begin{proof}
This follows from Theorem 6.3 and Proposition 7.2.
\end{proof}

Regarding now $H_n^+$, the combinatorics here is that of the Fuss-Catalan algebra \cite{bjo}, see \cite{bb+}, \cite{bbc}. Since $\mu$ is symmetric, the orthogonal polynomials are given by $Q_1=n$ and $Q_{k+1}=nQ_k-\beta_kQ_{k-1}$, where $\beta_k=\gamma_k/\gamma_{k-1}$, with $\gamma_k=\int n^kP_k$. The data is as follows:
\begin{center}
\begin{tabular}[t]{|l|l|l|l|l|l|l|l|l|l|l|l|l|l|l|l}
\hline &1&2&3&4&5&6&7&8\\
\hline $c_k$&1&3&12&55&273&1428&7752&43263\\
\hline $\gamma_k$&1&2&3&11/2&26/3&170/11&17.19/13&19.23/10\\
\hline $\beta_k$&1&2&3/2&11/6&52/33&15.17/11.13&11.19/130&13.23/170\\
\hline
\end{tabular}
\end{center}

\medskip

This suggests the following general formula:
$$\beta_k=
\begin{cases}
\displaystyle{\frac{3(3k-1)(3k+2)}{4(2k-1)(2k+1)}}&(k\ {\rm even})\\
\\
\displaystyle{\frac{3(3k-2)(3k+1)}{4(2k-1)(2k+1)}}&(k\ {\rm odd})
\end{cases}$$

The problem can be probably investigated by using techniques from \cite{hml}, \cite{leh}, \cite{mlo}. Our main problem is of course: what is the analogue of Theorem 7.3 for $H_n^+$?

Let us also mention that the computation of the orthogonal polynomials for $H_n,H_n^*$ looks like a quite difficult problem. Probably the good framework here is that of the quantum groups $H_n^{(s)}$ from \cite{ez1}, because at $s=2,\infty$ we have $H_n,H_n^*$.

We have as well the following question: is there a quantum group/planar algebra proof of Theorem 7.3, in the cases $B_n^+,S_n^+$? For $O_n^+$ this was done in Theorem 5.1.

\section{More manipulations}

We have seen in the previous section that the quantum group $H_n^+$ is somehow of a more complicated nature than the other quantum groups under consideration. 

In this section we restrict attention to $O_n^+,B_n^+,S_n^+$, and we further rearrange the formulae in Theorem 6.3. The idea comes from the formula of $O_n^+$. Indeed, the numbers $f_{kr}$ for $O_n^+$ count the $\mathcal P_{o^+}$ diagrams with $2r$ upper points and $2k$ lower points, with the property that each upper point is paired with a lower point. This kind of diagrams, called ``epi" in the paper of Jones, Shlyakhtenko and Walker \cite{jsw}, have the following generalization.

\begin{definition}
Let $\mathcal P$ be a category of partitions, and let $0\leq r\leq k$.
\begin{enumerate}
\item We let $\mathcal P^r(k)$ be the set of partitions $\sigma\in \mathcal P(r,k)$, with $0\leq r\leq k$, such that each upper point is connected to lower points only, and to at least one of them.

\item The elements of $\mathcal P^r(k)$ are called ``epi''. We let $\mathcal P^+(k)=\cup_{r=0}^k\mathcal P^r(k)$. For an epi $\sigma\in \mathcal P^r(k)$, we denote by $r(\sigma)=r$ the number of its upper legs.
\end{enumerate}
\end{definition}

With these notations, we can now state and prove our main result. This is a global formula for the Gram determinants associated to the quantum groups $O_n^+,B_n^+,S_n^+$.

\begin{theorem}
For $O_n^+,B_n^+,S_n^+$ we have the formula
$$\det(G_{kn})=n^{a_k}\prod_{\sigma\in \mathcal P^+(k)}\frac{F_{r(\sigma)}}{F_{r(\sigma)-1}}$$
where $F_r=P_{r/2},Q_{r/2},R_r$ are the corresponding extended orthogonal polynomials.
\end{theorem}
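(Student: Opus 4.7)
The plan is to group the product over $\mathcal P^+(k)$ by the value $r(\sigma)$, apply telescoping, and then match the resulting exponents with those of Theorem~6.3 via explicit counts of the epi sets. Setting $E_r^\times(k) := \#\mathcal P_\times^r(k)$ and adopting the natural convention $F_{-1}=1$, one has
$$\prod_{\sigma\in\mathcal P^+(k)}\frac{F_{r(\sigma)}}{F_{r(\sigma)-1}} \;=\; \prod_{r\geq 0}\left(\frac{F_r}{F_{r-1}}\right)^{E_r(k)} \;=\; \prod_{r\geq 1}F_r^{\,E_r(k)-E_{r+1}(k)},$$
the last step being a routine telescoping of exponents. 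For $O_n^+$ and $B_n^+$ only even $r$ contribute, since the partitions are pairings and $k$ is even; with $r=2s$ the ratio $F_{2s}/F_{2s-1}$ is read as $P_s/P_{s-1}$ and $Q_s/Q_{s-1}$ respectively.

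To match with Theorem~6.3, I would observe that the exponents there are themselves telescopic, $d^1_{kr}=f^1_{kr}-f^1_{k,r+1}$, so it suffices to establish the three combinatorial identities
$$E_{2s}^{o^+}(k)=f^1_{k/2,s},\qquad E_r^{s^+}(k)=f^1_{k,r},\qquad E_{2s}^{b^+}(k)=\sum_{\ell\geq s}\binom{k}{2\ell}f^1_{\ell,s}.$$
If these hold, then the exponent of each $F_r$ in the telescoped form exactly matches the exponent of the corresponding extended orthogonal polynomial in Theorem~6.3, and the two presentations of $\det(G_{kn})/n^{a_k}$ coincide.

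The first identity is the classical ballot count: encoding an epi in $\mathcal P^{2s}_{o^+}(k)$ by reading the $2s$ upper legs in reverse followed by the $k$ lower legs, and writing $+1$ for a left endpoint and $-1$ for a right endpoint, yields a bijection with walks on $\mathbb N$ of length $k$ starting at $2s$ and ending at $0$; these are enumerated by $\binom{k}{k/2-s}-\binom{k}{k/2-s-1}=f^1_{k/2,s}$ via the reflection principle. The $S_n^+$ identity then follows from the cabling bijection $\pi\mapsto\widetilde\pi$ of \cite{ksu}, \cite{cpr} invoked in the proof of Theorem~5.3, which restricts to a bijection $\mathcal P_{s^+}^r(k)\leftrightarrow\mathcal P_{o^+}^{2r}(2k)$. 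The $B_n^+$ identity comes from stratifying an epi by its total number $\ell$ of pairs: the $2s$ uppers are each necessarily paired with a lower, $\binom{k}{2\ell}$ positions are chosen for the $2\ell$ non-singleton lowers, and an $O_n^+$-type epi between $2s$ uppers and $2\ell$ lowers is formed, giving $f^1_{\ell,s}$ possibilities.

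The main obstacle is the ballot-type identity for $O_n^+$: while classical in spirit, one must pin down the correct bijection between noncrossing epi-pairings and walks on $\mathbb N$, and carefully verify the opening/closing convention. Once this is settled, the $S_n^+$ case follows by cabling, the $B_n^+$ case by a routine singleton-pair stratification, and the matching with Theorem~6.3 is purely algebraic bookkeeping via the telescoping identity.
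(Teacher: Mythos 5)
Your proposal is correct and follows essentially the same route as the paper's own proof: identify the exponents of Theorem~6.3 as epi counts (the reflection-principle count $\#\mathcal P_{o^+}^{2s}(k)=f^1_{k/2,s}$ for $O_n^+$, cabling for $S_n^+$, and the singleton stratification for $B_n^+$), then telescope the product over $\mathcal P^+(k)$. The only minor slip is that in your $B_n^+$ stratification the parameter $\ell$ is half the number of non-singleton lower legs rather than the total number of pairs, but the resulting count $\sum_{\ell}\binom{k}{2\ell}f^1_{\ell,s}$ is exactly the intended one.
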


\begin{proof}
Observe first that the $F_r$ quantities in the statement make indeed sense. This is because the epi for $O_n^+,B_n^+$ must have an even number of upper legs.

(1) For $O_n^+$ we have $f_{sr}^1=\#\mathcal P^{2r}(2s)$, so the formula in Theorem 6.3 becomes:
$$\det(G_{kn})=n^{a_k}\prod_{r=0}^{[k/2]}P_r(n)^{f_{k/2,r}-f_{k/2,r+1}}
=n^{a_k}\prod_{r=0}^{[k/2]}P_r(n)^{\#\mathcal P^{2r}(k)-\#\mathcal P^{2r+2}(k)}$$

Now since we have $\mathcal P^+(k)=\cup_{r=0}^{[k/2]}\mathcal P^{2r}(k)$, we obtain the formula in the statement:
$$\det(G_{kn})=n^{a_k}\prod_{r=0}^{[k/2]}\left(\prod_{\sigma\in \mathcal P^{2r}(k)}\frac{Q_r(n)}{P_{r-1}(n)}\right)=n^{a_k}\prod_{\sigma\in \mathcal P^+(k)}\frac{P_{r(\sigma)/2}(n)}{P_{r(\sigma)/2-1}(n)}$$

(2) For $B_n^+$ the epi have, according to our definitions, singletons only in the lower row. Thus these epi can be counted as function of those for $O_n^+$, and we get:
$$\det(G_{kn})=n^{a_k}\prod_{r=0}^{[k/2]}Q_r(n)^{\sum_{l=1}^{[k/2]}\binom{k}{2l}d_{lr}^1}
=n^{a_k}\prod_{r=0}^{[k/2]}Q_r(n)^{\#\mathcal P^{2r}(k)-\#\mathcal P^{2r+2}(k)}$$

A similar manipulation as in (1) gives now the formula in the statement.

(3) For $S_n^+$ the epi are in standard bijection (via fatenning/collapsing of neighbors) with the epi for $O_n^+$. Thus the formula in Theorem 6.3 becomes:
$$\det(G_{kn})=n^{a_k}\prod_{r=0}^kR_r(n)^{d_{kr}^1}=n^{a_k}\prod_{r=0}^kR_r(n)^{\#\mathcal P^r(k)-\#\mathcal P^{2r+1}(k)}$$

Once again, a similar manipulation as in (1) gives the formula in the statement.
\end{proof}

Observe that the quantum group $H_n^+$ cannot be included into the above general theorem, and this for 2 reasons: first, because the orthogonal polynomial interpretation of the polynomials appearing in Theorem 6.3. fails, cf. the previous section, and second, because the epi interpretation of the exponents appearing in Theorem 6.3 seems to fail as well. 

\section{Concluding remarks}

We have seen in this paper that the Gram matrix determinants have a natural interpretation in the easy quantum group framework, developed in \cite{bsp}, \cite{ez1}, \cite{ez2}, \cite{ez3}. The known computations, that we partly extended, simplified, or rearranged in this paper, provide a complete set of formulae for the main examples of easy quantum groups.

Our conjecture is that these Gram determinants should have general decompositions of type $\det(G_{kn})=\prod_{\pi\in \mathcal P(k)}\varphi(\pi)$. More precisely, the situation here is as follows:
\begin{enumerate}
\item For $S_n,H_n,H_n^*$ the conjecture holds, with $\varphi(\pi)=n!/(n-|\pi|)!$.

\item For $O_n,B_n,O_n^*$ we have a decomposition result, but over Young diagrams.

\item For $O_n^+,B_n^+,S_n^+$ we have a decomposition result, but over the associated epi.
\end{enumerate}

The remaining problem is to find the correct surjective maps for (2,3), i.e. the correct surjections from diagrams/epi to partitions.  Of course, this question is not very clearly formulated. The main problem is probably to understand the behavior of the Gram matrix determinants in relation with the liberation operation $G_n\to G_n^+$. Indeed, we expect in this situation the contributions $\varphi$ to be related by a kind of induction/restriction procedure.

In addition to the concrete computations performed in this paper, let us mention that there are as well some quite heavy, abstract methods, that we haven't really tried yet. First, the inclusion $G_n\subset G_n^+$ gives rise to a planar algebra module in the sense of Jones \cite{jo2}, and our above ``liberation conjecture'' can be understood as saying that the Gram matrix combinatorics behaves well with respect to this planar module structure. And second, modulo the orthogonal polynomial issues discussed in the previous section, some useful tools should come from the analytic theory of the Bercovici-Pata bijection \cite{bpa}.

\end{document}